\documentclass[pdflatex, sn-mathphys-ay]{sn-jnl}
\usepackage{mathrsfs}
\usepackage{graphicx, subcaption}%
\usepackage[compatibility=false]{caption}%
\usepackage{multirow}%
\usepackage{amsmath,amssymb,amsfonts}%
\usepackage{amsthm}%
\usepackage{mathrsfs}%
\usepackage[title]{appendix}%
\usepackage{xcolor}%
\usepackage{textcomp}%
\usepackage{manyfoot}%
\usepackage{booktabs}%
\usepackage{algorithm}%
\usepackage{algorithmicx}%
\usepackage{algpseudocode}%
\usepackage{listings}%
\usepackage{natbib}%
\usepackage[normalem]{ulem}

\newcommand{\reals}{\mathbb{R}}
\newcommand{\intg}{\mathbb{Z}}
\newcommand{\ind}{\mathbb{I}}

\newcommand{\mc}{\mathcal}

\newcommand{\mscr}{\mathscr}
\renewcommand{\v}[1]{\boldsymbol{#1}}
\newcommand{\pp}{\mathbb{P}}
\newcommand{\dd}{\mathrm{d}}
\newcommand{\ee}{\mathbb{E}}

\newcommand{\cB}{B}

\newcommand{\poi}{\textsf{Poi}}

\newcommand{\mx}{\v{x}}
\newcommand{\mX}{\v{X}}

\newcommand{\scG}{\mathscr{G}}
\newcommand{\lt}{\left}
\newcommand{\rt}{\right}
\newcommand{\wt}{\widetilde}
\newcommand{\wh}{\widehat}

\newcommand{\var}{\mathbb{V}\mathrm{ar}}
\newcommand{\vol}{\mathsf{Vol}}
\newcommand{\maxd}{\mathsf{MD}}
\newcommand{\mcc}{\mathsf{MCC}}
\newcommand{\ntg}{\mathsf{NTG}}
\newcommand{\ec}{\mathsf{EC}}
\newcommand{\mcs}{\mathsf{MCS}}

\newtheorem{observation}{Observation}
\newtheorem{corollary}{Corollary}
\newtheorem{lemma}{Lemma}
\newtheorem{experiment}{Numerical Study}

\theoremstyle{thmstyleone}%
\newtheorem{theorem}{Theorem}
\newtheorem{proposition}[theorem]{Proposition}%

\theoremstyle{thmstyletwo}%
\newtheorem{example}{Example}%
\newtheorem{remark}{Remark}%

\theoremstyle{thmstylethree}%

\raggedbottom

\begin{document}

\title[Rare-Event Simulation for Random Geometric Graphs]{Efficient Rare-Event Simulation for Random Geometric Graphs via Importance Sampling}

\author*[1]{\fnm{Sarat} \sur{Moka}}\email{s.moka@unsw.edu.au}

\author[2]{\fnm{Christian} \sur{Hirsch}}\email{hirsch@math.au.dk}

\author[3]{\fnm{Volker} \sur{Schmidt}}\email{volker.schmidt@uni-ulm.de}

\author[4]{\fnm{Dirk} \sur{Kroese}}\email{kroese@maths.uq.edu.au}

\affil*[1]{\orgdiv{School of Mathematics and Statistics}, \orgname{University of New South Wales}, \orgaddress{ \city{Sydney}, \postcode{2052}, \country{Australia}}}

\affil[2]{\orgdiv{Data Science and Statistics}, \orgname{Aarhus University}, \orgaddress{\city{Aarhus}, \postcode{8000}, \country{Denmark}}}

\affil[3]{\orgdiv{Institute of Stochastics}, \orgname{Ulm University}, \orgaddress{ \city{Ulm}, \postcode{D-89069}, \country{Germany}}}

\affil[4]{\orgdiv{School of Mathematics and Physics}, \orgname{The University of Queensland}, \orgaddress{\city{Brisbane}, \postcode{4072}, \country{Australia}}}

\abstract{Random geometric graphs defined on Euclidean subspaces, also called Gilbert graphs, are widely used to model spatially embedded networks across various domains. In such graphs, nodes are located at random in Euclidean space, and any two nodes are connected by an edge if  they lie within a certain distance threshold. Accurately estimating rare-event probabilities related to key properties of these graphs, such as the number of edges and the size of the largest connected component, is important in the assessment of risk associated with catastrophic incidents, for example. However, this task is computationally challenging, especially for large networks. Importance sampling offers a viable solution by concentrating computational efforts on significant regions of the graph. This paper explores the application of an importance sampling method to estimate rare-event probabilities, highlighting its advantages in reducing variance and enhancing accuracy. Through asymptotic analysis and numerical studies, we demonstrate the effectiveness of our methodology, contributing to improved analysis of Gilbert graphs and showcasing the broader applicability of importance sampling in complex network analysis.}

\keywords{Gilbert Graph, Spatial Point Process, Unbiased Estimation, Hard-spheres Model, Edge Count, Maximum Degree}

\maketitle

\section{Introduction}\label{sec:intro}

Random geometric graphs have emerged as powerful mathematical models for representing spatially embedded networks in various fields such as wireless communication, sensor networks, and materials science, see e.g.  \cite{cov, baccelli2009stochastic1, baccelli2009stochastic2, bm, netCom,kenniche2010random, thm}. 
These graphs are defined by distributing nodes at random in a metric space, connecting pairs of nodes within a certain distance threshold, and forming a network that captures spatial relationships. In particular, in this paper, we consider random geometric graphs on a $d$-dimensional subset of the Euclidean space $\reals^d$ for any fixed integer $d\ge 1$, where a graph is created by a collection of (random) points with an edge between any two points that are within the (Euclidean) distance  of one length unit from each other. Such a graph is also known as  Gilbert graph \citep{gilbert}. For a wide and deep mathematical treatment of Gilbert graphs, we refer the reader to \cite{penrose}.

Accurate estimation of key characteristics in Gilbert graphs, such as the mean values of  the size of the largest component or the maximum degree, is crucial for understanding the behavior and performance of systems modeled by these graphs. However, except for a few isolated special cases, such fundamental characteristics cannot be computed in closed form. On the other hand, Monte Carlo simulation has become a central tool in the study of such networks; see e.g.  \cite{bb2,bb1}, where the simplicity of the Gilbert graph model makes it possible to estimate the typical behavior of large random networks by standard Monte Carlo simulation to a very high precision. However, in many applications, it is not enough to know the average case. We need to understand how the system behaves with respect to rare events, where a rare event is an event that occurs infrequently but can have a significant impact. For instance, in the context of telecommunication networks, it is not enough that the network provides good service on average; rather, users expect it to work well with very high probability. These challenges have been the motivation for questions of rare-event probabilities in spatial random networks, which is now a vibrant research field. {While the network models encountered in practice are more complex than the Gilbert graph, we believe that our results represent a first step toward rare-event analysis of more realistic spatial random networks, with significant potential in combining the importance-sampling schemes developed in this paper with large-deviation principles.}

%
%
To understand the scope and complexity of such research problems, we first note that investigating rare events is a challenging topic even for basic random graph models that do not involve any form of geometric information, such as the Erd\H{o}s-R\'enyi graph. For example, there is a series of papers that investigate probabilities of  large deviations for the number of triangles in this type of graphs, see \cite{tr1,tr3,tr2}. Also, the largest connected component was recently investigated, see \cite{comp1,comp2}. While \cite{comp1}  {  deals with only} the Erd\H{o}s-R\'enyi graph, the more recent paper by \cite{comp2} considers more general (kernel-based)  random  graphs.

%
%
However, for Gilbert graphs,  rare events are  harder to analyze. For instance, while it is easy to analyze the probability of having atypically many edges in the Erd\H{o}s-R\'enyi graph, this problem is difficult in the case of Gilbert graphs  \citep{harel}. Loosely speaking, the large deviations are governed by a condensation effect, i.e., the most likely reason for observing too many edges is to have a larger number of nodes 
in a small area that gives rise to a clique. In contrast, the behavior in the regime of lower large deviations is completely different. There, the most likely reason to observe too few edges comes from consistent changes throughout the sampling window \citep{lowTails}.  Only very recently it became possible to understand the rare-event behavior of the largest connected component in spatial models of complex networks \citep{comp3}.

In the light of these challenges, the overall aim of our paper is to show the effectiveness of the powerful technique of importance sampling for the purpose of rare-event analysis in the context of the Gilbert graph. More precisely, we pursue the following goals:
 (i) We propose conditional Monte Carlo and importance sampling estimators for a variety of rare events in the Gilbert graph such as the question whether edge count, maximum degree or clique count are below a fixed threshold. (ii) While these estimators are of a general abstract form, we present a specific grid-based scheme, which we show is easy to implement.
(iii)  Our paper is the first one which gives an estimator with bounded relative error in the context of the Gilbert graph. More precisely, in Theorem~\ref{thm:asym_Yhat} we show that  under mild conditions,  for a fixed sampling window both our proposed conditional Monte Carlo and the importance-sampling estimator have bounded relative error. (iv) Finally, we illustrate this  in a scaling regime of a growing window, where we can prove that the importance sampling estimator exhibits a bounded relative error, whereas the conditional Monte Carlo does not. This illustrates that the more complicated importance-sampling scheme holds the promise of more substantial reduction in variance, where the theoretical results are also supported by an extensive simulation study.

To summarize this, we can state that
importance sampling offers a promising approach to address the computational challenges associated with estimating properties of Gilbert graphs. By assigning appropriate weights to samples, importance sampling focuses computational effort on regions of the graph that contribute significantly to the desired property, thus improving the efficiency of estimation.
The present paper explores the application of importance sampling techniques for estimating key graph properties in Gilbert graphs, where we explore the theoretical foundations of importance sampling, emphasizing its benefits in reducing variance and improving the accuracy of estimators. Additionally, we discuss the intricacies of adapting importance sampling to the specific characteristics of these graphs, considering factors such as spatial distribution, distance metrics, and connectivity constraints.
Through analysis and simulations, we demonstrate the effectiveness of importance sampling in providing more efficient and accurate estimates of critical graph properties. We specifically identify two key regimes to illustrate asymptotic efficiency of the proposed importance sampling estimator. Our findings not only contribute to the methodological toolbox for analyzing random geometric graphs but also shed light on the broader applicability of importance sampling in the realm of complex network analysis.

The subsequent sections of this paper are organized as follows. In Section~\ref{sec:notation}, we introduce some notation that is useful throughout the paper. The problem setup of rare-event simulation for Gilbert graphs along with important examples is presented in Section~\ref{sec:prelim}. In Section~\ref{sec:est_meth}, we summarize two existing methods for rare-event simulation, namely na\"{i}ve and conditional Monte Carlo methods, and then introduce the general framework of our importance sampling approach. The focus of Section~\ref{sec:is} is put on implementation of the importance sampling method using blocking regions on the sampling window. In Section~\ref{sec:efficiency}, we compare the variances of all the methods and study {  the asymptotic efficiency} of the proposed method over two important regimes. Simulation results are presented in Section~\ref{sec:sim}, whereas Section~\ref{sec:conclusion} concludes.

\section{Notation and Efficiency Notions}
\label{sec:notation}
Throughout the paper, the underlying probability space is denoted by $(\Omega, \mscr{F}, \pp)$. 
The sets of real numbers and integers are denoted by $\reals$ and $\intg$, respectively, while the sets of non-negative real numbers and non-negative integers are correspondingly denoted by $\reals_+$ and $\intg_+$. 
For any probability measure $\mu$ and random element $\mX$, we write $\mX \sim \mu$ to denote that $\mX$ is distributed according to $\mu$. The distribution of a Poisson random variable with rate parameter $\beta>0$ is denoted by $\textsf{Poi}(\beta)$.
For a real-valued random variable $X$, its expectation and variance are denoted by $\ee[X]$ and $\var(X)$, respectively. 
When necessary, to emphasize the dependency on a measure $\mu$, we use the notation $\pp_\mu$, $\ee_\mu$ and $\var_\mu$ to make it clear that  probability, expectation and variance, respectively, are taken under the  measure $\mu$. For any fixed integer $d\ge 1$ and any Borel set $S \subseteq \reals^d$, we denote its volume by $\vol(S)$ which is, formally speaking, the $d$-dimensional Lebesgue measure of the set $S$.
In particular, we denote the volume of the $d$-dimensional Euclidean sphere of unit radius by~$\upsilon_d$, i.e.,
\begin{align}
\upsilon_d =  \frac{\pi^{d/2}}{\Gamma(d/2 + 1)},
\label{eqn:unit-sphere-volume}
\end{align} 
where {  $\Gamma: (0, \infty) \to \reals_+$} is the gamma function. 

For the asymptotic analysis considered in Section~\ref{sec:asymptotic}, we use two standard notions of efficiency. Suppose that $\lt\{Y_t : t > 0\rt\}$ is a family of (real-valued) estimators parameterized by $t > 0$ such that $\lim_{t \to \infty} \ee[Y_t] = 0$.  We say that the family has an asymptotic  bounded relative error as $t \rightarrow \infty$ if 
\begin{align}
	\limsup_{t \rightarrow \infty} \frac{\var\lt(Y_t\rt)}{\ee[Y_t]^2} < \infty. \label{eqn:bdd-rel-err}
\end{align}
A slightly weaker notion is  logarithmic efficiency, which holds if 
\begin{align}
	\liminf_{t \rightarrow \infty} \frac{|\log \var\lt(Y_t\rt)|}{2 |\log \ee[Y_t] |} \geq 1,\quad
\text{or, equivalently,} \quad
    \limsup_{t \rightarrow \infty} \frac{\var\lt(Y_t\rt)}{\ee[Y_t]^{(2-\varepsilon)}} = 0, \label{eqn:log-eff}
\end{align}
for each $\varepsilon > 0$. Here, ``weaker" means that logarithmic efficiency implies an asymptotic bounded relative error. Since $\lim_{t \to \infty} \ee[Y_t] = 0$, the variance $\var\lt(Y_t\rt)$ in Eqs.~\eqref{eqn:bdd-rel-err} 
and \eqref{eqn:log-eff}
can be replaced by the second moment $\ee\lt[Y_t^2\rt]$. For more details on these notions of efficiency, we refer to \cite{AG07} and \cite{RK2017}.

\section{Rare Events in Gilbert Graphs}
\label{sec:prelim}
In this section, we introduce the notions of   Gilbert graphs and of rare events associated with this type of graphs. We further provide some  examples of rare events. 

Consider the $d$-dimensional sampling window ${W = [0, \lambda]^d}$,  for some $\lambda>0$ and some integer  $d\ge 1$.  Then, for each ${n \in \intg_+}$, let $\scG_n$ be the family of all finite subsets of size $n$ on $W$, i.e.,
\[
\scG_n = \Big\{\mx = \{x_1, x_2, \dots, x_n\} : x_i \in W\,\, \text{for all}\,\, i=1,\ldots, n \Big\},
\]
where $n = 0$ corresponds to the empty set. Put ${\scG = \bigcup_{n \in \intg_+} \scG_n}$ and 
notice that the elements of $\scG$ are so-called  simple point patterns, i.e., they do not have multiple points.

A  point process is a random element ${\mX : (\Omega, \mc F) \to (\scG, \mc G)}$, where $\mc G$ denotes the Borel $\sigma$-algebra on $\scG$. 
By $\rho:\mc G\to[0,1]$ we denote the probability measure on $\scG$ under which, for each $n \in \intg_+$, 
the restriction $\mX_n = \{X_1, \dots, X_n\} \in \scG_n$ of $\mX$  to $\scG_n$  is a point process which consists of $n$ independent and uniformly distributed points in the window $W$.  
Furthermore, a point process $\mX = \{X_1, \dots, X_N\}\in \scG$ is called a $\kappa$- homogeneous Poisson point process  on $W$ 
with intensity ${\kappa > 0}$ if for the  (random) total number of points $N$ it holds that ${N \sim \mathsf{Poi}(\kappa \lambda^d)}$, and for any $n \in \intg_+$, conditioned on $N = n$, we have $\mX \sim \rho$.

From now onward, to simplify the notation, we put
\begin{align}
\beta = \kappa\, \vol(W) = \kappa \lambda^d,
\qquad\text{ and }\qquad
q_n = \exp(-\beta) \frac{\beta^n}{n!} \quad\mbox{for each \,$n \in \intg_+$}. 
\label{eqn:defn-beta}
\end{align}
That is, $\{q_n : n \in \intg_+\}$ is the probability mass function of the Poisson distribution $\poi(\beta)$. Furthermore, by $F_{\poi}:\intg_+\to [0,1]$ we denote the cumulative  distribution function of $\mathsf{Poi}(\beta)$, i.e.,
\begin{align}
F_{\poi} (n) = \sum_{i = 0}^n q_i  \quad\mbox{for each $n\in\intg_+$.}
\label{eqn:Fpoi}
\end{align}

For any $\mx \in \scG$, let $G(\mx)$ be the graph constructed by taking the points in $\mx$ as nodes and connecting every two distinct points ${x, x' \in \mx}$ by an edge if and only if $\|x - x' \| \leq 1$, where  $\| \cdot \|$ denotes the Euclidean norm in $\reals^d$. 
A random graph $G(\mX)$ is called a  Gilbert graph if the set of nodes $\mX = \{X_1, \dots, X_N\}$ constitutes a $\kappa$-homogeneous Poisson point process in $W$ for some $\kappa>0$. Then, for any  $B\in\mc G$,   it holds that
\begin{align}
\label{eqn:pm}
	\pp\lt(\mX \in B\rt) = \sum_{n \in \intg_+} q_n\, \pp_{\rho}(\mX_n \in B),
\end{align}
with $\mX_n=\{X_1,\ldots,X_n\} \sim \rho$ for each $n \in \intg_+$.
 Two realizations of Gilbert graphs on a bounded subset of the Euclidean plane $\reals^2$ are shown in Figure~\ref{fig:GG-example}.

  \begin{figure}[H]
    \centering
    \begin{subfigure}{0.48\linewidth}
        \centering
        \includegraphics[width=\linewidth]{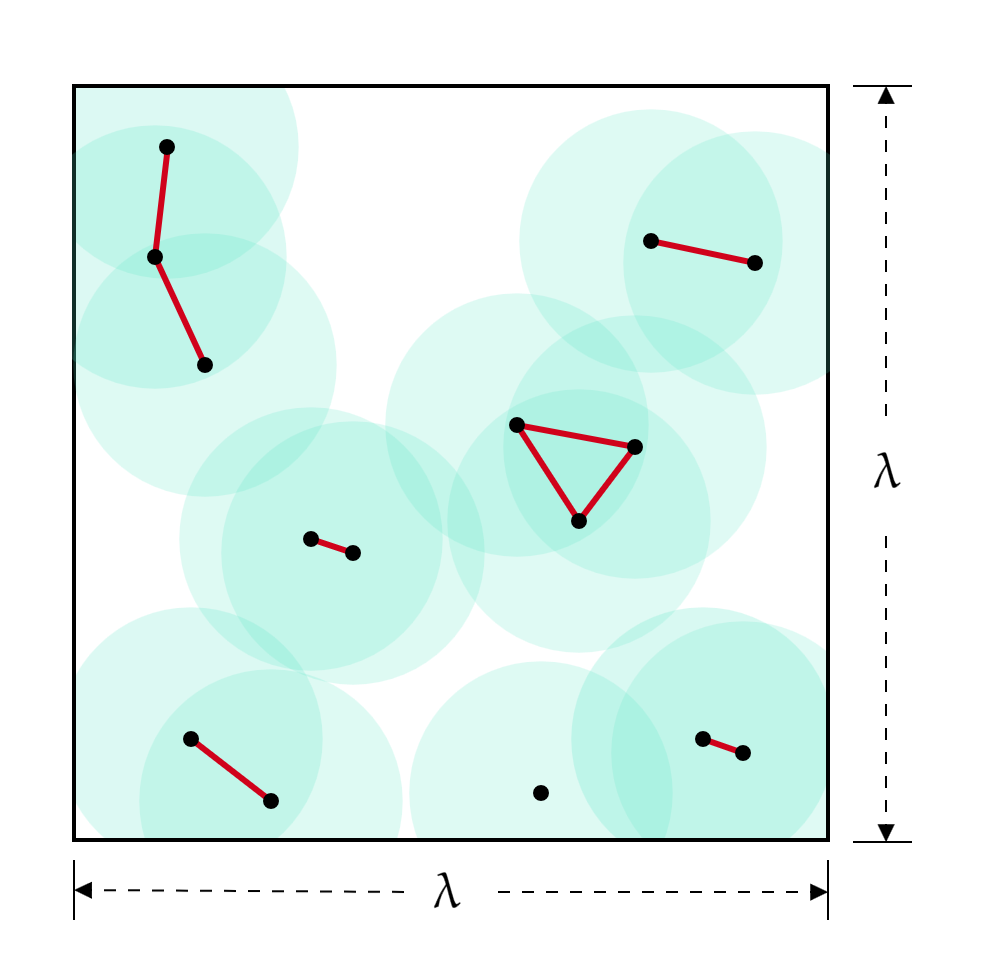}
        \caption{}
    \end{subfigure}
    \hfill
    \begin{subfigure}{0.48\linewidth}
        \centering
        \includegraphics[width=0.965\linewidth]{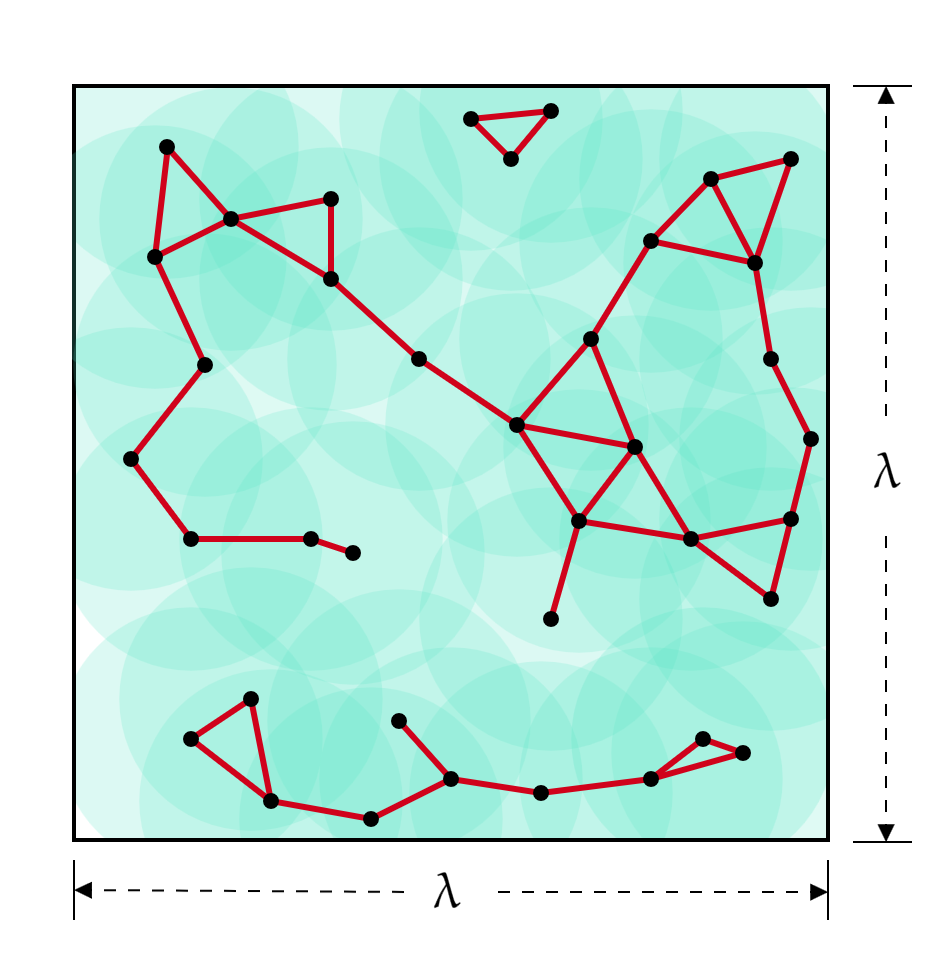}
        \caption{}
    \end{subfigure}
    \caption{Example realizations of Gilbert graphs on a 2-dimensional window $W = [0, \lambda]^2$, where black points represent the nodes, red lines represent the edges, and each circle centered at a node has a unit radius. Small intensity $\kappa$ typically leads to few nodes and few edges as in (a) while large $\kappa$ typically leads to a bigger graph with more edges as in (b).}
    \label{fig:GG-example}
\end{figure}

Consider a $\kappa$-homogeneous Poisson point process $\mX$ in $W$   and let $A \in  \mc G$ be a non-empty set of realizations of $\mX$ such that  $\pp(\mX \in A)$ is close to zero, i.e., the occurrence of $A$ is very unlikely. Then,  $\{\mX \in A\}$ is called a rare event. However, note that this definition contains a certain degree of vagueness from a mathematical point of view.
Furthermore, assume that $A\in \mc G$ satisfies the  hereditary property, i.e., 
for any $\mx, \mx' \in \scG$ such that $\mx \subseteq \mx'$, $\mx' \in A$ implies that $\mx \in A$.
A consequence of the hereditary property is that for any sequence of points $x_1, x_2, \ldots \in W$, it holds that 
\begin{align}
\label{eqn:hereditary}
	\ind(\mx_n \in A) \geq \ind(\mx_{n+1} \in A) \qquad\text{for each $n\ge 1$,}
\end{align}
where $\mx_n = \{x_1, \dots, x_n\}$ and $\ind(B):\Omega\to\{0,1\}$ denotes the indicator of the
 event $B\in\mscr{F}$, i.e., $\ind(B)(\omega)=1$ if $\omega\in B$, and $\ind(B)(\omega)=0$ otherwise.

When $A$ is taken to be the set of all the configurations of the Gilbert graph with no edges, the corresponding rare event probability $\pp(\mX \in A)$ appears as the grand partition function of the popular hard-spheres model in grand canonical form. This model has many applications in various disciplines, including  physics, chemistry, and material science; see e.g.  
\cite{krauth2006statistical, MJM17}. In particular the probability density of the hard-spheres model is given by $
f(\mx) = \ind(\mx \in A)/\pp(\mX \in A)$,  $\mx \in \scG$, and efficient estimation of $\pp(\mX \in A)$ is crucial for understanding key properties of the model \citep{doge2004grand}.

We will now give five more examples of sets $A\in\mc G$ that satisfy the hereditary property, and describe situations where the probability  $\pp(\mX \in A)$ is close to zero. 
Note that, if we take the threshold parameter $\ell = 0$ in these examples, then  the  probability~ $\pp(\mX \in A)$ is the same for  the first four examples, 
being equal to the grand partition function of the hard-spheres model.

\begin{example} [\bf Edge Count]
\label{expl:ec}
\normalfont
For any $\mx \in \scG$,  the number of edges in $G(\mx)$  will be denoted by $\ec(\mx)$. Furthermore, for a given threshold $\ell \in \intg_+$, let 
$A = \{\mx \in \scG : \ec(\mx) \leq \ell \}$ be the event of interest. 
Then, the  value of ~$ \pp\lt(\mX \in A\rt)$, i.e., the probability that the number of edges in the Gilbert graph $G(\mX)$ is at most $\ell$, can be very small for values of $\kappa$ and $\ell$ such that $\ell$ is much smaller 
than the expected number of edges ${\ee\lt[ \ec(\mX)\rt]}$.
\end{example}

\begin{example} [\bf Maximum Degree]
\label{expl:deg} 
\normalfont

We say that two nodes of a graph are adjacent if there is an edge between them.
For any $\mx \in \scG$, the  degree of a node $x \in \mx$ of $G(\mx)$, denoted by $\mathsf{Deg}(x)$, is the number of nodes  $x' \in \mx$  adjacent to $x$, i.e., such that $0<\|x-x'\|\le 1$. 
The maximum degree of the graph $G(\mx)$ is given by
$
\maxd\lt( \mx\rt) = \max\{\mathsf{Deg}(x) : x \in \mx\}.
$
Consider the event
$A = \{ \mx \in \scG : \maxd\lt( \mx\rt) \leq \ell\}$ that the maximum degree is less than or equal to $\ell$, for some  $\ell\in\intg_+$.  
Then, for values of $\kappa$ and $\ell$ such that $\ell$ is much smaller than the expected maximum degree $\ee[\maxd \lt( \mX\rt)]$, the   probability ~$\pp\lt(\mX \in A\rt)$ can be very small.
\end{example}

\begin{example}[\bf Maximum Connected Component]
\normalfont
\label{expl:mcc}
For any $\mx \in \scG$, two nodes $x, x' \in \mx$ of $G(\mx)$ with $x\not= x'$ are said to be  connected if there is a sequence of nodes $x_1, x_2, \dots, x_n \in \mx$ for some integer $n>1$ such that 
$x_1 = x$, $x_n = x'$, and for all $k = 2,\dots, n$, $x_k$ and $x_{k -1}$ are adjacent to each other. 
A subset of nodes $\mx' \subseteq \mx$ of $G(\mx)$ is called a  connected component if all the nodes in $\mx'$ are 
connected with each other and none of the nodes in $\mx'$ is connected with any node in $\mx\setminus \mx'$. 
Let $\mcc(\mx)$ be 
the size of the largest connected component in $G(\mx)$, where the size of a connected component is the number of nodes belonging to this connected component. 
Consider the event 
$A = \{ \mx \in \scG : \mcc\lt( \mx\rt) \leq \ell +1\}$ 
for some $\ell  \in \intg_+$ such that $\ell$ {  is much smaller} than $\ee[\mcc(\mX)]$. 
Then, also in this case,  the  probability ~$\pp\lt(\mX \in A\rt)$ 
that the size of each connected component is at most~$\ell + 1$, can be very small.  
\end{example}

\begin{example}[\bf Maximum Clique Size]
\normalfont
\label{expl:mcs}
A  clique of a graph is a subgraph that is complete, which means that any two distinct vertices in the subgraph 
are adjacent. Denote the 
 maximum clique size by $\mcs(\mx)$ and    consider the event 
$A = \{ \mx \in \scG : \mcs( \mx) \leq \ell + 1\}$,
for some $\ell  \in \intg_+$ which is much smaller than $\ee[\mcs( \mX)]$. 
The probability~$ \pp\lt(\mX \in A\rt)$ 
 that all  cliques are at most of  size~$\ell+1$, is then  close to zero. 
\end{example}

\begin{example}[\bf Number of Triangles]
\normalfont
\label{expl:ntg}
Another important quantity is the number of triangle subgraphs in an undirected graph, where 
a triangle is a clique with $3$ vertices and $3$ edges. 
Let $\ntg(\mx)$ denote the number of triangles in $G(\mx)$, and consider the event
$A = \{ \mx \in \scG : \ntg\lt( \mx\rt) \leq \ell \}$
for some  $\ell  \in \intg_+$. 
Then, the  probability~$ \pp\lt(\mX \in A\rt)$  that {  the number of triangles} in $G(\mX)$ is not larger than $\ell$, can be very small if  $\ell$ is 
much smaller than
$\ee[\ntg(\mX)]$. Note that  
instead of triangles, we can  consider cliques of any fixed size $k\ge 3$ and the event   that the number of cliques of size $k$ is at most $\ell$. 
\end{example}

{ 
\begin{example}[\bf Planarity]
\normalfont
\label{expl:planar}
A graph is called \emph{planar} if it can be embedded in the plane without edge crossings.
Consider the event
$A = \{ \mx \in \scG : G(\mx) \text{ is planar} \}$.
Since any subgraph of a planar graph is planar, the event $A$ satisfies the hereditary property; refer to \cite{diestel2024} for more details. By Kuratowski's theorem~\citep{kuratowski1930probleme}, a graph is non-planar if and only if it contains a subdivision of $K_5$ (the complete graph on five vertices) or $K_{3,3}$ (the complete bipartite graph with two sets of three mutually connected vertices) as a subgraph.
For sufficiently large intensity $\kappa$, the graph $G(\mX)$ almost surely contains a $K_5$ or $K_{3,3}$ minor and the probability $\pp(\mX \in A)$ is close to zero.
\end{example}
}

\section{Estimation Methods}
\label{sec:est_meth}

In this section, we first review two existing methods for estimating  probabilities  of the form given in Eq.~\eqref{eqn:pm}, namely  na\"{i}ve Monte Carlo and  conditional Monte Carlo. 
Then, we present a general framework of the proposed  importance sampling method, which consistently exhibits a variance less than or equal to that of the two existing methods. The following simple observation helps in understanding why the proposed method is efficient. 
\begin{observation}
\label{obs:fund}
For any event $B\in\mscr{F}$, let $\xi_1,\xi_2:\Omega\to[0,1]$ be two random variables such that ${\xi_1 = \ind(B)}$ and $\ee[\xi_2] = \ee[\xi_1] = \pp(B)$. Then, $\var(\xi_2) \leq \var(\xi_1),$
which easily follows from the fact that $\xi_2^2 \leq \xi_2$ almost surely and $\ee[\xi_2] = \ee[\xi_1]=\ee[\xi_1^2]$. 
This essentially suggests that for estimating the  probability $\pp(B)$, instead of using samples of $\xi_1$, it can be more efficient to use samples of $\xi_2$ whenever possible.
\end{observation}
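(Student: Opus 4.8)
The plan is to establish the inequality by passing to second moments. Since $\xi_1$ and $\xi_2$ are assumed to have the common mean $\ee[\xi_1]=\ee[\xi_2]=\pp(B)$, the claimed bound $\var(\xi_2)\le\var(\xi_1)$ is equivalent to $\ee[\xi_2^2]\le\ee[\xi_1^2]$, so it suffices to control the two second moments separately. First I would record that $\xi_1=\ind(B)$ is $\{0,1\}$-valued and hence idempotent, $\xi_1^2=\xi_1$, so that $\ee[\xi_1^2]=\ee[\xi_1]=\pp(B)$ and consequently $\var(\xi_1)=\pp(B)-\pp(B)^2$.

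Next I would use the standing hypothesis that $\xi_2$ takes values in $[0,1]$ almost surely. For every real $x\in[0,1]$ one has $x^2\le x$; applying this pointwise to $\xi_2$ gives $\xi_2^2\le\xi_2$ almost surely, and taking expectations together with $\ee[\xi_2]=\pp(B)$ yields $\ee[\xi_2^2]\le\pp(B)=\ee[\xi_1^2]$. Combining this with $\ee[\xi_2]=\ee[\xi_1]=\pp(B)$ gives
\[
\var(\xi_2)=\ee[\xi_2^2]-\ee[\xi_2]^2\;\le\;\pp(B)-\pp(B)^2\;=\;\ee[\xi_1^2]-\ee[\xi_1]^2\;=\;\var(\xi_1),
\]
which is the assertion.

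There is essentially no obstacle in this argument: beyond the definition of the variance, the only ingredient is the elementary inequality $x^2\le x$ on $[0,1]$, applied to the sample values of $\xi_2$. The substantive content lies elsewhere, namely in actually exhibiting, for the rare events of Examples~\ref{expl:ec}--\ref{expl:ntg}, an unbiased estimator $\xi_2\in[0,1]$ of $\pp(\mX\in A)$ whose variance is \emph{strictly} smaller than that of the crude indicator $\xi_1$ --- ideally by a large factor in the rare-event limit. This is precisely what the conditional Monte Carlo and importance sampling constructions of the subsequent sections are designed to do.
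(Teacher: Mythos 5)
Your argument is correct and is exactly the one the paper gives: since the means agree, the inequality reduces to comparing second moments, where $\xi_1^2=\xi_1$ because $\xi_1$ is an indicator and $\xi_2^2\le\xi_2$ because $\xi_2\in[0,1]$. No differences to report.
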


For any fixed  $B\in\mscr{F}$,
a simple and basic method for estimating the probability
$ \pp(B)$ 
 is  na\"{i}ve Monte Carlo.
To see this, let $\mX$ be a $\kappa$-homogeneous Poisson point process in the window $W$ and consider ${B=\{\mX \in A\}}$ for some 
 $A \in  \mc G$ and the random variable
\begin{align}
 Y = \ind(\mX \in A).
 \label{eqn:naive-Y}
\end{align}
Now, for any integer $m\ge 1$, let $Y_1, \dots, Y_m$ be a sequence of independent and identically distributed (iid) copies of $Y$. Then, the sample mean 
\begin{align}\label{nai.est.def}
Z_{\tt NMC} = \frac{1}{m}(Y_1 + \cdots + Y_m)
\end{align}
is an unbiased estimator of  $\pp(\mX \in A)$; i.e., $ \ee_\rho[Z_{\tt NMC}] = \ee_\rho[Y] = \pp(\mX \in A)$. Note that it is easy to generate  a sample of $Y$. Namely, one only needs to generate a sample
$\mx \in \scG$ 
of the $\kappa$-homogeneous Poisson point process $\mX$ in the window $W$, 
and take $Y = 1$ if $\mX \in A$, otherwise, take $Y = 0$.

On the other hand, for the edge count problem (see Example~\ref{expl:ec}), a  
conditional Monte Carlo estimator was recently proposed by \cite{HMTK20}. We now state this method more generally in our set-up. Suppose that $X_1, X_2, \ldots:\Omega\to W$ is a sequence of independent  random points that are uniformly distributed in $W$.  Let $\mX_n = \{X_1, \ldots, X_n\}$ for each $n  \in \intg_+$, and 
$
	M = \max \{n  \in \intg_+: \ind(\mX_n \in A) = 1\}
$ for some 
 $A \in  \mc G$ which satisfies the hereditary property.
Then, from Eq.~\eqref{eqn:pm} we get that
\begin{align}
\pp(\mX \in A) = \ee_\rho\lt[ \sum_{n \in \intg_+} q_n\, \ind(\mX_n \in A) \rt]= \ee\lt[ \sum_{n = 0}^M q_n \rt]
      = \ee\lt[ F_{\poi}(M) \rt],
	\label{eqn:cmc_exp} 
\end{align}
where $F_{\poi}$
is the cumulative distribution function of $\poi(\beta)$
given in Eq.~\eqref{eqn:Fpoi}.
Now, consider the random variable $\wh Y:\Omega\to[0,1]$  which is defined as
\begin{align}
\label{eqn:Y_hat}
\wh Y = \sum_{n \in \intg_+} q_n\, \ind(\mX_n \in A) =F_{\poi}(M),
\end{align} 
and,  for any integer $m\ge 1$, let $\wh Y_1, \dots, \wh Y_m$ be a sequence of iid copies of $\wh Y$. 
Then, from Eq.~\eqref{eqn:cmc_exp} we have
\begin{align}
\label{cmc.est.def}
Z_{\tt CMC} = \frac{1}{m}(\wh Y_1 + \cdots + \wh Y_m)
\end{align}
is also an unbiased estimator of  $\pp(\mX \in A)$.
Furthermore, by  Observation~\ref{obs:fund}, it is evident that the variance of the conditional Monte Carlo estimator $Z_{\tt CMC}$ does not exceed that 
of the na\"{i}ve Monte Carlo estimator $Z_{\tt NMC}$ given in Eq.~\eqref{nai.est.def}, because for the random variables $Y$ and $\wh Y$ introduced in Eqs.~\eqref{eqn:naive-Y}
and \eqref{eqn:Y_hat}, respectively, it holds that
$Y \in \{0,1 \}$ and $\widehat Y \in [0,1]$, while $\ee[Y] = \ee[\widehat Y]$; see also Proposition~\ref{prop:var_relation} later in Section~\ref{sec:efficiency}. 

However, note that the estimator $\widehat Y$ given in Eq.~\eqref{eqn:Y_hat}
is still a weighted sum of Bernoulli random variables. Using importance sampling, 
we now construct an estimator for  $\pp(\mX \in A)$ with a possibly further reduced variance,  where each Bernoulli random variable 
$\ind(\mX_n \in A)$ in  Eq.~\eqref{eqn:Y_hat} will be replaced by a non-binary random variable, which takes values in the interval $[0, 1]$ and has  the same expectation. 

Recall that in both methods considered above, i.e., for getting the   na\"{i}ve and conditional Monte Carlo estimators for  $\pp(\mX \in A)$, the random points  $X_1, X_2, \ldots:\Omega\to W$ were independent and uniformly distributed in the cubic sampling window $W = [0, \lambda]^d$. Instead of this sampling scheme, i.e., instead of generating each of these points independently of the other points, 
we now use a different procedure, where each point generation can depend on the locations of already generated points. 
Formally, we are no longer considering
the  probability measure $\rho:\mc G\to[0,1]$ 
introduced in Section~\ref{sec:prelim},
under which 
the restriction $\mX_n = \{X_1, \dots, X_n\} \in \scG_n$ of the $\kappa$-homogeneous Poisson point process $\mX$  to $\scG_n$  is
a point process that consists of $n$ independent and uniformly distributed points 
for each $n \in \intg_+$. 
But, instead of $\rho$, we consider a new probability measure $\mu:\mc G\to[0,1]$  {  such that $\rho$ is absolutely continuous with respect to $\mu$ on  $A \in  \mc G$}, i.e., 
\begin{align}
\rho(A')=\int_{A'}
 L(\mx) \mu(\dd\mx),
	\label{eqn:likelihood} \qquad\mbox{for each $A'\in  {\mc G }\cap A $.}
\end{align}
{  Since $\widetilde{Y}$
  is an unbiased estimator for $\mathbb{P}(\boldsymbol{X} \in A)$ under $\mu$,
  from Observation~1, requiring $L \leq 1$ ensures $\widetilde{Y} \in [0,1]$, which guarantees
  $\mathrm{Var}(\widetilde{Y}) \leq \mathrm{Var}(Y)$.} We then get 
\begin{align}
	\pp_\rho(\mX_n \in A) = \ee_\mu[\ind(\mX_n \in A) L(\mX_n)], \quad n\ge 1. \label{eqn:change_of_measure}
\end{align}
This equality can be used in order to construct a third unbiased estimator for $\pp(\mX \in A)$, besides the estimators $Z_{\tt NMC}$ and $ Z_{\tt CMC}$ discussed above. For this, let
\begin{align}
\label{eqn:Y-tilde} 
\wt Y &= \sum_{n \in \intg_+} q_n\, \ind(\mX_n \in A)L(\mX_n),
\end{align} 
where it follows from Eqs.~\eqref{eqn:pm} and~\eqref{eqn:change_of_measure} that 
\(\pp(\mX \in A)= \ee_\mu[\wt Y].\)
Furthermore, for any integer $m\ge 1$, let $\wt Y_1, \dots, \wt Y_m$ be a sequence of independent and identically distributed copies of $\wt Y$. 
Then,  
\begin{align}\label{def.wt.zet}
Z_{\tt IS} = \frac{1}{m}(\wt Y_1 + \cdots + \wt Y_m)
\end{align}
is an unbiased estimator for the probability  $\pp(\mX \in A)$.

Proposition~\ref{prop:var_relation} in Section~\ref{sec:efficiency} establishes the relationship between the variances of the three estimators $Z_{\tt NMC}$, $Z_{\tt CMC}$ and $Z_{\tt IS}$, showing that the variance of $ Z_{\tt IS}$ does not exceed the variance of~$Z_{\tt CMC}$, which in turn does not exceed the variance of~$Z_{\tt NMC}$, where it is our goal  to select $\mu$ so that the event $\{\mX \in A\}$ is not rare under $\mu$, and hence from now on we refer to $\mu$ as the importance sampling measure. 

\section{Importance Sampling Using Blocking Regions}
\label{sec:is}
In Section~\ref{sec:est_meth}, we presented a general idea of importance sampling to estimate the probability~$\pp(\mX \in A)$, where $A \in  \mc G$ is some rare-event of interest. 
We now present an example of an importance sampling measure $\mu$, where the choice of  $\mu$ is inspired by the perfect sampling method for hard-spheres models proposed by~\cite{MJM17}. 

The key idea of this importance sampling method is to generate points sequentially so that each point falls 
outside a certain blocking region created by the existing points. 
Specifically, for any existing configuration ${\mx_n = \{x_1, \dots, x_n\}}$, 
we say that a region ${\cB(\mx_n) \subseteq W}$ is blocked by $\mx_n$ if any  new point $x$ 
selected in $\cB(\mx_n)$ is guaranteed to satisfy ${\mx_n \cup \{x\} \notin A }$.
That is, selecting the next point over the blocked region results in a configuration outside~$A$. 
Note that $B(\mx)$, possibly the whole window $W$, is defined for any configuration $\mx \in \scG$, not just for configurations in $A$.

Since random points $X_1,X_2,\ldots$ generated under $\rho$  are independent and uniformly distributed in the cubic sampling window $W=[0,\lambda]^d$, we have that  
$\rho(\dd \mx) = (1/\lambda^{nd}) \dd\mx,$  for any $\mx \in \scG_n$ and
for all integers $n \ge 1$, where $\lambda^d$ is the volume of $W$. 
Suppose now that for every configuration ${\mx \in \scG}$, a blocking region $\cB(\mx)$ can be identified easily and its volume can be computed exactly. Then, under $\mu$,  random points $X_1, X_2, \dots $ are sequentially 
generated such that $X_{n}$ is uniformly generated on the non-blocking region ${W\setminus B(\mX_{n-1})}$, 
where ${\mX_{n-1} = \{X_1, \dots, X_{n-1}\}}$ is the set of already generated points. 
We stop the procedure when either ${\mX_{n} \notin A}$ or ${B(\mX_n) = W}$. 
Then, with $\mx_0$ denoting an empty set of points, the Radon–Nikodym derivative $L:A\to\mathbb{R}^+$ introduced in Eq.~\eqref{eqn:likelihood} is given by
\begin{align}
\label{eqn:radon-nikodym-der}
L(\mx_n) = \prod_{i=0}^{n-1}\Big(1  - \frac{\vol(\cB(\mx_{i}))}{\vol(W)}\Big),\qquad \mbox{for any $n\ge 1$ and $\mx_n \in A$,}
\end{align}
where $\cB(\mx_{0}) = \varnothing$; i.e., the blocking region is empty when there are no points. 
Note that the term $\left(1 - \vol(\cB(\mx_{i}))/\vol(W)\right)$  in Eq.~\eqref{eqn:Y-tilde}  is the ratio of the uniform density 
$1/\vol(W) = 1/\lambda^d$ on the whole window~$W$ and the uniform density $1/ (\lambda^d - \vol(\cB(\mx_{i})))$ over 
the non-blocked region~${W\setminus \cB(\mx_{i})}$ for the $(i+1)$th point.  
Given that $\vol(\cB(\mx)) \leq \lambda^d$ for any $\mx \in \scG$, we ensure that $L(\mx) \le 1$  for any $\mx \in A$, as desired.
With this choice of the importance sampling measure $\mu$, the estimator $Z_{\tt IS}$ for $\pp(\mX \in A)$, introduced in Eq.~\eqref{def.wt.zet}, is 
determined by $\wt Y$ given in Eq.~\eqref{eqn:Y-tilde} with $L:A\to[0,1]$ taken to be as in Eq.~\eqref{eqn:radon-nikodym-der}; see also
Algorithm~\ref{alg:is} below.
\begin{algorithm}
  \caption{Importance Sampling Method}
  \label{alg:is}
  \begin{algorithmic}[1]
    \State{$\cB \leftarrow \varnothing$}
    \State{$n \leftarrow 0$ and $L_0 \leftarrow 1$} 
	\Repeat
	\State{$n \leftarrow n+1$}
    	 \State{$L_n \leftarrow L_{n-1} \times\lt(1 - \frac{\vol(\cB)}{\vol(W)}\rt)$} 
    	 \State{Generate the next point $X_n$ uniformly on $W\setminus \cB$}    
	     \State{$\mX_n \leftarrow  \lt\{X_1, \dots, X_n\rt\}$}
	     \State{Update the blocking region $\cB$ for the next point}
    \Until{$\cB = W$ or $\mX_n \notin A$}
    \State $\wt Y \leftarrow \sum_{i = 0}^{n} q_i\, L_i$\\  
\Return{$\wt Y$}
  \end{algorithmic}
\end{algorithm}

Ideally, in each iteration of the algorithm, we would like to identify the blocking region with the maximum possible volume; 
see Figure~\ref{subfig:block_a}. {  In that case, we can remove $\mX_n \notin A$ from the stopping condition as it never happens when $\cB \neq W$}. Unfortunately, it can be computationally challenging to identify such a maximal blocking region
and to compute its volume exactly so that a uniformly distributed point can be generated over the region outside the blocking region. However,  since any subset of a maximal blocking region is also a valid blocking region, we can identify such sub-blocking regions in a computationally easy way. 
For this, we use a {\em grid} on $W$. 
In particular, we partition the window $W$ into a cubic grid of size~$K^d$. Each cell in the grid is uniquely indexed by a vector of dimensions $d$ $(k_1, \dots, k_d) \in \{0, 1, \dots, K-1\}^d$ so that $\times_{j = 1}^d\lt[k_j{\lambda/K}, \, (k_j+1){\lambda}/{K}\rt]$ is the cell with index $(k_1, \dots, k_d)$.

To implement Algorithm~\ref{alg:is}, the first point $X_1$ is generated uniformly on the window $W$. At the $n$-th iteration, suppose that $\mX_{n-1}$ denotes the set of points generated in the previous iterations. To generate the $n$-th point, we identify a set of cells that are completely covered by the maximum blocking region and take the blocking region $\cB(\mX_{n-1})$ as the union of these cells. Then, we generate the next point $X_n$ uniformly over the non-blocking cells. 
See Figure~\ref{fig:block} for an illustration of this grid-based approach to find blocking regions. 
\begin{figure}[ht]
    \centering
    \begin{subfigure}{0.49\linewidth}
        \centering
        \includegraphics[scale=0.375]{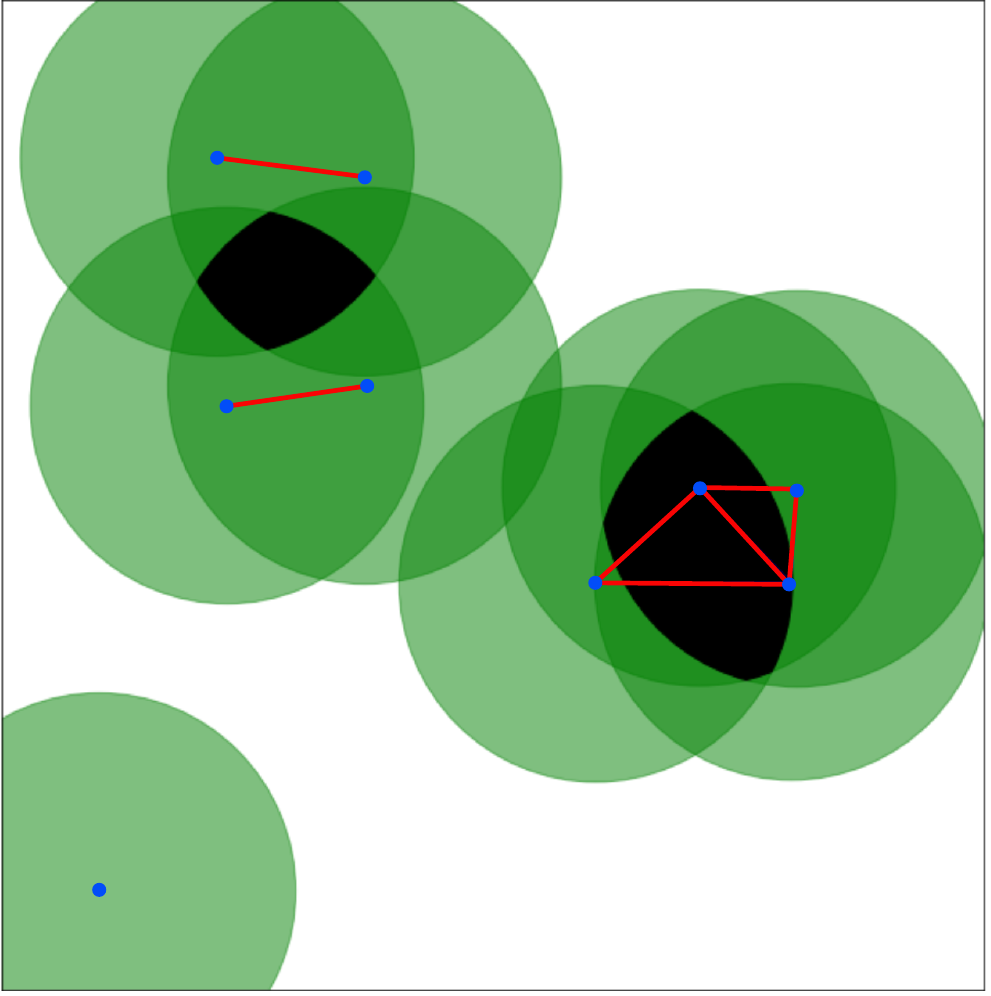}
        \caption{\label{subfig:block_a}}
    \end{subfigure}%
    ~ 
    \begin{subfigure}{0.49\linewidth}
        \centering
        \includegraphics[scale=0.38]{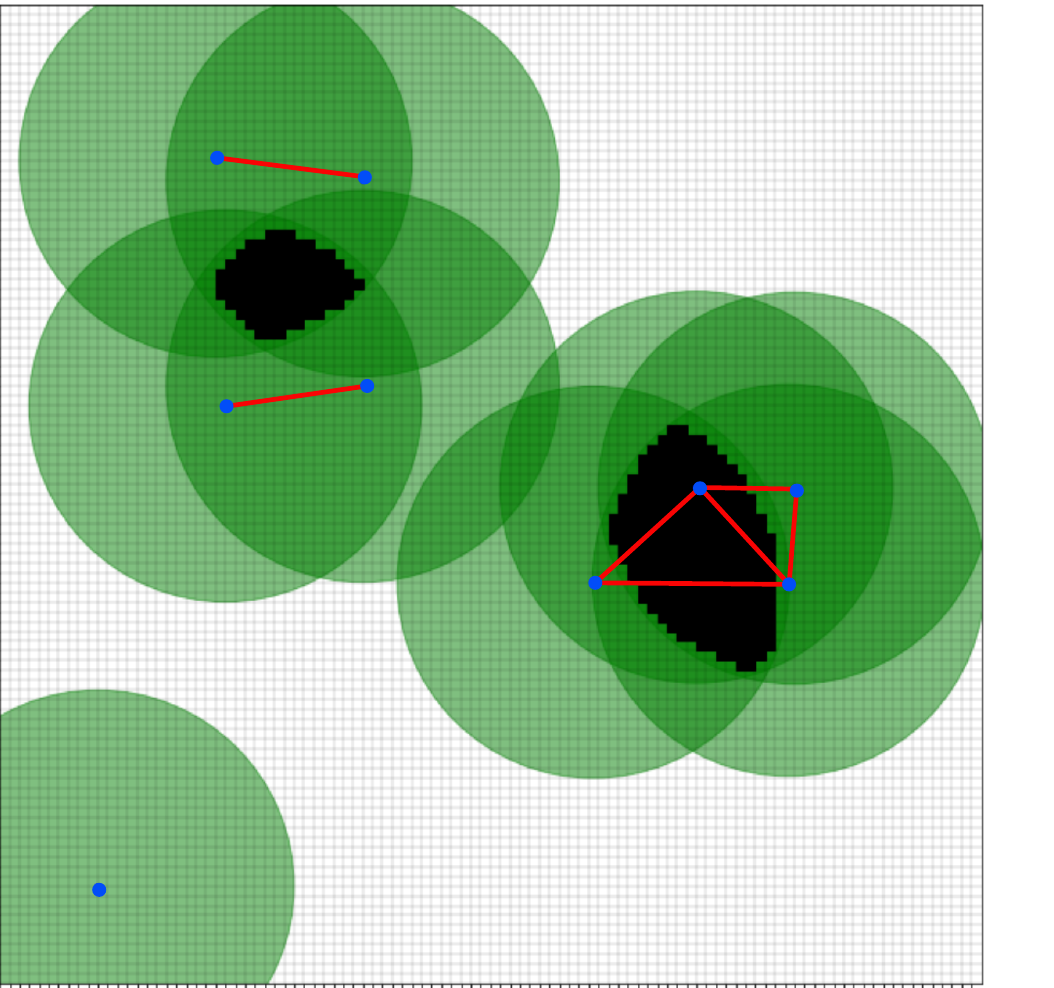}
        \caption{\label{subfig:block_b}}
    \end{subfigure}
\caption{Illustration of generating points under the proposed importance sampling method for the edge count problem with $\ell = 10$ on a two-dimensional window. There are $9$ existing points creating $7$ edges. The black region in (a) is the maximum possible blocking region and selecting the next point over the black region in  (a) will result in the number edges being more than $10$. Ideally, we would like to generate the next point outside this maximal blocking region (as in  (a)). However, identifying that region is difficult. The grid-based importance sampling method easily approximates this region from inside as shown in (b).}
    \label{fig:block}
\end{figure}

Below we describe procedures for constructing the blocking regions $\cB(\mX_1)$, $\cB(\mX_2), \dots$ for Examples~\ref{expl:ec}-\ref{expl:ntg} and, in red, for Example~\ref{expl:planar}.  
For this, we define the distance $\mathsf{Dist}(C, C')$ between any two distinct cells $C$ and $C'$ as
\begin{align}
\mathsf{Dist}(C, C') = \sup\big\{\|x - x'\| : x \in C, x'\in C'\big\},
\label{eqn:distance}
\end{align}
which is the  greatest possible distance between a point in $C$ and a point in $C'$.  
If ${\mathsf{Dist}(C, C') \leq 1}$, then the cells $C$ and $C'$ are called neighbors, denoted by $C \sim C'$.
For any point $x \in W$, $C(x)$ denotes the cell for which $x \in C(x)$. Observe that since we are generating each point $x$ uniformly on a cell, with probability $1$, $x$ must be an interior point of the cell, and thus $x$ belongs to only one cell.\\

\noindent
{\bf Edge Count.} Consider the rare-event probability defined in Example~\ref{expl:ec}: the probability of the event
$A = \{\mx \in \scG : \ec(\mx) \leq \ell \}$. That is, we want to estimate the probability that the number of edges $\ec(\mx)$ in the graph $G(\mx)$ is less than $\ell$. After generating $n$ points, suppose $\mx_n = \{x_1, \dots, x_n\} \in A$ is the current configuration.  A cell $C$ of the grid is said to be of {\em order $k$} if 
$\big| \{ i = 1, \dots, n : C \sim C(x_i) \}\big| = k$.
That is, exactly $k$ of $\{C(x_1), \dots, C(x_n)\}$ are within unit distance from $C$ under the distance definition Eq.~\eqref{eqn:distance}. 
Denote the union of all the order $k$ cells by $D_k(\mx_n)$. Let $e_n = \ec(G(\mx_n))$. Then, the region
\begin{align*}
\cB(\mx_n) = \bigcup_{k >  \ell - e_n} D_{k}(\mx_{n})
\end{align*}
is blocked because selecting the next point over $\cB(\mx_n)$ will prevent the event $A$ from occurring. 
In each iteration, after generating the next point $x$, we update the order of all the cells that are neighbors to $C(x)$.\\

\noindent
{\bf Maximum Degree.} For Example~\ref{expl:deg}, where $A = \{ \mx \in \scG : \maxd\lt( \mx\rt) \leq \ell\}$ and  $\maxd\lt( \mx\rt)$ denotes the maximum degree of the graph $G(\mx)$, the grid-based importance sampling procedure is similar to the procedure stated above, except a cell $C$ is blocked either if $C$ is of order greater than $\ell$ or if there is an existing point $x_i$ with degree at least $\ell$ and $C \sim C(x_i)$. This is because, in both the cases, a new point on $C$ will lead to {  the maximum degree of the graph being greater than $\ell$}. Thus, 
 for this example, the blocked region $\cB(\mx_n)$ is given by
\begin{align*}
\cB(\mx_n) = \lt(\bigcup_{k > \ell} D_{k}(\mx_n)\rt) \cup \lt( \bigcup_{x \in \mx_n} \{C : C \sim C(x)\,\, \text{and}\,\, \mathsf{deg}(x) \geq \ell  \}\rt),
\end{align*}
where $\mx_n$ is the existing configuration and $\mathsf{deg}(x)$ is the degree of the node at $x$.\\

\noindent
{\bf Maximum Connected Component.}  To construct the blocking region for the rare event $A = \{ \mx \in \scG : \mcc\lt( \mx\rt) \leq \ell +1\}$ 
in Example~\ref{expl:mcc}, for any configuration $\mx_n \in A$ after generating $n$ points, decompose the set of points into connected components. Consider all the connected components of size $\ell + 1$. We call a cell $C$ blocked if there exists $x \in \mx_n$ such that $x$ is part of a connected component of size $\ell +1$ and $C \sim C(x)$. Then, the overall blocked region $\cB(\mx_n)$ is the union of all the blocked cells.\\

\noindent
{\bf Maximum Clique Size.}  For Example~\ref{expl:mcs}, where $A = \{ \mx \in \scG : \mcs( \mx) \leq \ell + 1\}$, suppose that  $\mx_n \in A$ is the configuration after generating $n$ points. Similar to the above procedure for the maximum connected component, identify all the cliques in $\mx_n$ of size $\ell + 1$. We now call a cell $C$ blocked if there exists a clique $\mx' \in \mx_n$ of size $\ell+1$ such that $C \sim C(x)$ for all $x \in \mx'$.\\

\noindent
{\bf Number of Triangles.}
Finally, for Example~\ref{expl:ntg}, where $A = \{ \mx \in \scG : \ntg\lt( \mx\rt) \leq \ell \}$,  we generate points until there are exactly $\ell$ triangles.
After that, for each point generation, we identify the cells $C$ where a new point selection over $C$
results in a new triangle. The union of such cells is the blocking region for that point generation.\\

\noindent
{\bf Planarity.}                                                               
  For Example~\ref{expl:planar}, where $A = \{ \mx \in \scG : G(\mx) \text{ is
  planar}\}$,                                                                    
  consider any configuration $\mx_n \in A$ after generating $n$ points.
  A cell $C$ is blocked if a new point $x$ placed in $C$ would create a $K_5$ or
  $K_{3,3}$
  subgraph in the resulting graph.
  For $K_5$: $C$ is blocked if there exist four nodes
  $\{v_1, v_2, v_3, v_4\} \subseteq \mx_n$ forming a $K_4$ in $G(\mx_n)$,
  with $C \sim C(v_i)$ for $i = 1, 2, 3, 4$;
  any new point $x \in C$ is then connected to all four nodes,
  creating a $K_5$ on $\{x, v_1, v_2, v_3, v_4\}$.
  For $K_{3,3}$: $C$ is blocked if there exist nodes
  $\{u_1, u_2\} \subseteq \mx_n$ and $\{w_1, w_2, w_3\} \subseteq \mx_n$
  forming a $K_{2,3}$ in $G(\mx_n)$ (each $u_i$ connected to each $w_j$),
  with $C \sim C(w_j)$ for $j = 1, 2, 3$;
  any new point $x \in C$ is then connected to all of $w_1, w_2, w_3$,
  creating a $K_{3,3}$ with bipartition $\{x, u_1, u_2\}$ and $\{w_1, w_2,
  w_3\}$.

\begin{remark}[Graphs with a Fixed Number of Nodes]
\normalfont
Recall that the total number of nodes $N$ in the Gilbert graph is a Poisson random number. Now suppose that the graph is constructed with the number of nodes fixed, say $N = n$. Then, from Eq.~\eqref{eqn:Y_hat}, the conditional Monte Carlo estimator $\wh Y$ becomes identical to {  the na\"{i}ve Monte Carlo} estimator $Y$ given by Eq.~\eqref{eqn:naive-Y} (i.e., $F_\poi$ is replaced by the degenerative distribution with all the mass at $n$). That is 
\[
Y = \wh Y = \ind(\mX_n \in A),
\]
where for the configuration {  $\mX_n$ of $n$ points} it holds that $\mX_n \sim \rho$. Thus, the conditional Monte Carlo method brings no variance reduction in this scenario; i.e., $\var(\wh Y) = \var(Y)$.  On the other hand, the  proposed importance sampling can still reduce the variance, because
\[
\wt Y = \ind(\mX_n \in A)L(\mX_n),
\]
where $\mX_n \sim \mu$.
Since $L(\mX_n) \in [0, 1]$, we have $\var(\wt Y) < \var(Y)$, except for trivial cases with values of~$n$, where $L(\mX_n) = 1$ almost surely.
\end{remark}

\section{Efficiency Analysis}
\label{sec:efficiency}
In this section, we first demonstrate that our importance sampling estimator achieves the lowest variance among the three estimators presented in Section~\ref{sec:est_meth}. We then illustrate its asymptotic efficiency in comparison to the other methods through two interesting scenarios: one with a fixed sampling window and the other with a growing window. For this, to simplify the notation, let 
\begin{align}
    \label{eqn:defn_pn}
    p_n = \pp_\rho(\mX_n \in A)\quad \text{and}\quad  
    p^* = \pp(\mX \in A) = \sum_{n \in \intg_+} q_n p_n,
\end{align}
for a rare event $A$ of interest that satisfies the hereditary property, where we recall that $\mX$ denotes a $\kappa$-homogeneous Poisson point process on the sampling window $W = [0, \lambda]^d$. 
\subsection{Variance Comparison}
\label{sec:non-asymptotic}
Proposition~\ref{prop:var_relation} demonstrates that the variance of the importance sampling estimator is lower than that of the conditional Monte Carlo estimator, which, in turn, is lower than that of the na\"{i}ve Monte Carlo estimator. Here, the notation $\var_\mu(\wt Y)$ emphasizes that $\wt Y$ is constructed using points that are generated under the importance measure $\mu$, as in Eq. \eqref{eqn:Y-tilde},  while $\var_\rho(\wh Y)$ and $\var_\rho(Y)$ emphasize that both $\wh Y$  and $Y$ are constructed using points that are generated under $\rho$ as in Eq. \eqref{eqn:Y_hat} and Eq. \eqref{eqn:naive-Y}, respectively.

\begin{proposition}
\label{prop:var_relation}
For any intensity $\kappa$ and window size $\lambda$, we have
\[
\var_\mu(\wt Y) \leq \var_\rho(\wh Y) \leq \var_\rho(Y).
\]
\end{proposition}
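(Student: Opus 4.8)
The plan is to reduce everything to a comparison of second moments of single samples. Since $Z_{\tt NMC},Z_{\tt CMC},Z_{\tt IS}$ are sample means of $m$ i.i.d.\ copies of $Y,\wh Y,\wt Y$, and since all three are unbiased for $p^* = \pp(\mX\in A)=\sum_{n\in\intg_+}q_np_n$ (as established in Section~\ref{sec:est_meth}; see Eq.~\eqref{eqn:defn_pn}), it suffices to prove $\ee_\mu[\wt Y^2]\le\ee_\rho[\wh Y^2]\le\ee_\rho[Y^2]$. The second inequality here --- equivalently $\var_\rho(\wh Y)\le\var_\rho(Y)$ --- is immediate from Observation~\ref{obs:fund} applied with $B=\{\mX\in A\}$, $\xi_1=Y$ and $\xi_2=\wh Y$, because $\wh Y=F_\poi(M)\in[0,1]$ and $\ee_\rho[\wh Y]=\ee_\rho[Y]$. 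So the real content is the first inequality, which I would prove directly from the double-sum expansions of the second moments.

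Two elementary facts drive the computation. First, $p_n=\pp_\rho(\mX_n\in A)$ is non-increasing in $n$: under $\rho$ the configurations are nested, $\mX_m\subseteq\mX_n$ for $m\le n$, and the hereditary property~\eqref{eqn:hereditary} then gives $\ind(\mX_n\in A)\le\ind(\mX_m\in A)$. Second, and for the same reason, $\ind(\mX_m\in A)\,\ind(\mX_n\in A)=\ind(\mX_{\max(m,n)}\in A)$ whenever $\mX_m\subseteq\mX_n$ --- which holds both under $\rho$ and under $\mu$, since in either scheme $X_1,X_2,\dots$ are generated one point at a time. Expanding the square and taking $\ee_\rho$ termwise therefore yields
\begin{align*}
\ee_\rho[\wh Y^2]=\sum_{m,n\in\intg_+}q_mq_n\,\ee_\rho\big[\ind(\mX_m\in A)\,\ind(\mX_n\in A)\big]=\sum_{m,n\in\intg_+}q_mq_n\,p_{\max(m,n)}.
\end{align*}

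For the first inequality I would expand $\ee_\mu[\wt Y^2]=\sum_{m,n}q_mq_n\,\ee_\mu[\ind(\mX_m\in A)L(\mX_m)\,\ind(\mX_n\in A)L(\mX_n)]$ and bound each term by $p_{\max(m,n)}$. Taking $m\le n$ without loss of generality, on every path one has $\ind(\mX_m\in A)\ge\ind(\mX_n\in A)$ by the fact above and $L(\mX_m)\le 1$ from the product formula~\eqref{eqn:radon-nikodym-der} (each factor lies in $[0,1]$), so $\ind(\mX_m\in A)L(\mX_m)\,\ind(\mX_n\in A)L(\mX_n)\le\ind(\mX_n\in A)L(\mX_n)$ pointwise; taking $\ee_\mu$ and invoking the change-of-measure identity $\ee_\mu[\ind(\mX_n\in A)L(\mX_n)]=\pp_\rho(\mX_n\in A)=p_n$ from Eq.~\eqref{eqn:change_of_measure} shows the term is at most $p_n=p_{\max(m,n)}$. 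Summing over all $m,n$ gives $\ee_\mu[\wt Y^2]\le\sum_{m,n}q_mq_n\,p_{\max(m,n)}=\ee_\rho[\wh Y^2]$, hence $\var_\mu(\wt Y)\le\var_\rho(\wh Y)$, and combined with the already-noted inequality $\var_\rho(\wh Y)\le\var_\rho(Y)$ the proposition follows.

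The step I expect to require the most care is this last termwise bound. One must resist arguing a (false) pointwise inequality $\wt Y\le\wh Y$ --- the two variables live under different measures --- and instead combine a genuine pointwise bound on the \emph{products} $\ind(\mX_m\in A)L(\mX_m)\,\ind(\mX_n\in A)L(\mX_n)$ with the unbiasedness identity~\eqref{eqn:change_of_measure} applied to the single index $n$. Two minor points also need a sentence each: that the sum defining $\wt Y$ is finite on every path (automatic, since the product in~\eqref{eqn:radon-nikodym-der} acquires a zero factor once $\cB(\mX_i)=W$, and $\ind(\mX_n\in A)$ vanishes once the configuration leaves $A$), and that the nestedness $\mX_m\subseteq\mX_n$ used to collapse indicators holds a.s.\ because the sequentially generated points are a.s.\ distinct. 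Everything else is bookkeeping with the double sums.
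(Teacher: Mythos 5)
Your proposal is correct and follows essentially the same route as the paper's proof: reduce to second moments, dispatch $\ee_\rho[\wh Y^2]\le\ee_\rho[Y^2]$ via Observation~\ref{obs:fund}, expand both second moments as double sums over $(m,n)$, collapse the product of indicators using the hereditary property, bound $L(\mX_m)\le 1$ termwise, and convert back to $p_n$ via the change-of-measure identity~\eqref{eqn:change_of_measure}. The only (cosmetic) difference is that you organize the double sum through $p_{\max(m,n)}$ rather than splitting it into diagonal and off-diagonal parts as the paper does.
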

\begin{remark}[Relationship with Optimal Importance Sampling]
    Using Theorem~1.2 in Chapter V of \citep{AG07}, we can conclude that the optimal (i.e., zero-variance) importance sampling measure $\mu^*$ for the rare-events considered in this paper has a Radon–Nikodym derivative  $L^*:A\to[0,1]$ given by
    \[
    L^*(\mx_n) = p_n \quad \text{for all}\, \, \mx_n \in A\,\, \text{and}\,\, n \in \intg_+,
    \]
    where $p_n$ is given in Eq.~\eqref{eqn:defn_pn}. 
    Unfortunately, sampling from such optimal measure is impractical as it involves the unknown probabilities  $p_n$. If we had access to all $p_n$, we could directly compute $p^* = \pp(\mX \in A)$ exactly using Eq.~\eqref{eqn:defn_pn}. 
    Our proposed importance sampling measure $\mu$ strikes a balance between practicality and variance reduction.  
    Specifically, the importance sampling estimator retains a positive, yet minimized, variance, as demonstrated in  Proposition~\ref{prop:var_relation}. 
    This is achieved by bringing the values of   $L(\mx_n)$ closer to  $L^*(\mx_n) = p_n$, as supported by Observation~\ref{obs:fund}. 
\end{remark}
\begin{proof}[Proof of Proposition~\ref{prop:var_relation}]
First observe that $
\ee_\rho[Y] = \ee_\rho[\wh Y] = \ee_\mu[\wt Y]$.
Therefore, it is sufficient to prove that 
\begin{align}
\label{eqn:2nd-mt-rel}
\ee_\mu[\wt Y^2] \leq \ee_\rho[\wh Y^2] \leq \ee_\rho[Y^2] = p^*,
\end{align}
where the equality $\ee_\rho[Y^2] = p^*$ holds, because $Y$ is a Bernoulli random variable.
The second inequality $\ee_\rho[\wh Y^2] \leq \ee_\rho[Y^2]$ follows from Observation~\ref{obs:fund}.
To prove the inequality $\ee_\mu[\wt Y^2] \leq \ee_\rho[\wh Y^2]$, note that 
\begin{align}
\ee_\rho[\wh Y^2] = \sum_{n \in \intg_+} \sum_{m \in \intg_+} q_n q_m\, \pp_\rho(\mX_n \in A\,\, \text{ and }\,\, \mX_m \in A)
             = \sum_{n \in \intg_+} q_n^2\, p_n  + 2 \sum_{ m < n } q_m q_n\, p_n,
             \label{eqn:expand_exp_Yhat}
\end{align}
where we used the fact that $\{\mX_n \in A\} \subseteq  \{\mX_m \in A\}$ for all $m < n$. Then, using Eq.~\eqref{eqn:change_of_measure}, we get that
\begin{align}
\ee_\mu[\wt Y^2] &= \ee_\mu\lt[ \lt(\sum_{n \in \intg_+} q_n \ind(\mX_n \in A) L(\mX_n) \rt)^2\rt]\nonumber\\
                          &= \sum_{n \in \intg_+} \sum_{m \in \intg_+}  q_n q_m \, \ee_\mu\lt[\ind(\mX_n \in A) \ind( \mX_m \in A) L(\mX_n) L(\mX_m)\rt]\nonumber\\                         
                          &= \sum_{n \in \intg_+} q_n^2\, \ee_\mu\lt[\ind(\mX_n \in A) L(\mX_n)^2 \rt] + 2 \sum_{ m < n } q_n q_m \, \ee_\mu\lt[\ind(\mX_n \in A) L(\mX_m) L(\mX_n)\rt].\label{eqn:tilde_Y_expand}
\end{align}
Since $0 \leq L(\mX_i) \leq 1$ for all $i$, this gives that 
\begin{align}                         
      \ee_\mu[\wt Y^2]    &\leq \sum_{n \in \intg_+} q_n^2\, \ee_\mu\lt[\ind(\mX_n \in A) L(\mX_n) \rt] + 2 \sum_{ m < n } q_nq_m \ee_\mu\lt[\ind(\mX_n \in A) L(\mX_n)\rt]\nonumber\\
                          &= \sum_{n \in \intg_+} q_n^2\,\pp_\rho(\mX_n \in A)  + 2 \sum_{ m < n } q_n q_m\,\pp_\rho(\mX_n \in A)\label{eqn:hat_Y_expand}
\end{align}
{  which is equal to $\ee_\rho[\wh Y^2 ]$ from \eqref{eqn:expand_exp_Yhat}, and hence completing the proof. }
\end{proof}

Proposition~\ref{prop:var_relation} established that in general our importance sampling method is 
superior to or, at the very least, as effective as the conditional Monte Carlo method in minimizing variance. 
Our next result, Proposition~\ref{prop:gamma_improve}, provides more insights on this aspect. 
To this end, for $m \leq n$, define
\begin{align}
\gamma_{m,n} = \ee_\rho\lt[L(\mX_{m})\,\big| \,\mX_n \in A\rt] = \frac{\ee_\rho[L(\mX_{m})\ind(\mX_n \in A)]}{p_n}. \label{eqn:gamma_mn}
\end{align}
That is, $\gamma_{m, n}$ is the conditional expectation of $L(\mX_m)$ given $\mX_n \in A$, where $\mX_n$ is a set of $n$~uniformly and independently 
distributed points on the observation window $W = [0, \lambda]^d$. 
\begin{proposition} 
\label{prop:gamma_improve}
It holds that
\begin{align}
\label{eqn:expand_hat_Y2}
\ee_\rho[\wh Y^2] = \sum_{n \in \intg_+} q_n^2 p_n + 2\sum_{m < n} q_n q_m p_n,
\end{align}
and 
\begin{align}
\label{eqn:expand_tilde_Y2}
\ee_\mu[\wt Y^2] = \sum_{n \in \intg_+} q_n^2 \gamma_{n,n}p_n + 2\sum_{m < n} q_n q_m \gamma_{m, n}p_n.
\end{align}
\end{proposition}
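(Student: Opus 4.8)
The plan is to establish the two identities for $\ee_\rho[\wh Y^2]$ and $\ee_\mu[\wt Y^2]$ by expanding the squares and carefully tracking the nesting structure of the events $\{\mX_n \in A\}$, which is the key structural fact coming from the hereditary property. The first identity, Eq.~\eqref{eqn:expand_hat_Y2}, has essentially already been derived inside the proof of Proposition~\ref{prop:var_relation}: writing $\wh Y = \sum_{n} q_n \ind(\mX_n \in A)$ and squaring gives a double sum $\sum_{n,m} q_n q_m \pp_\rho(\mX_n \in A \text{ and } \mX_m \in A)$, and since $\mx \mapsto \ind(\mx \in A)$ is nonincreasing along the nested sequence $\mX_1 \subseteq \mX_2 \subseteq \cdots$, we have $\{\mX_n \in A\} \subseteq \{\mX_m \in A\}$ whenever $m \le n$. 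Hence the joint probability collapses to $p_{\max(m,n)}$, and splitting the double sum into the diagonal $m = n$ and the two symmetric off-diagonal halves $m < n$ yields exactly $\sum_n q_n^2 p_n + 2\sum_{m<n} q_n q_m p_n$. So for the first part I would simply recall this computation (or cite Eq.~\eqref{eqn:expand_hat_Y2} as already appearing as Eq.~\eqref{eqn:hat_Y_expand}/its precursor in the previous proof).

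For the second identity I would start from Eq.~\eqref{eqn:tilde_Y_expand} in the proof of Proposition~\ref{prop:var_relation}, which already gives
\[
\ee_\mu[\wt Y^2] = \sum_{n \in \intg_+} q_n^2\, \ee_\mu\!\lt[\ind(\mX_n \in A) L(\mX_n)^2 \rt] + 2 \sum_{m < n} q_n q_m \, \ee_\mu\!\lt[\ind(\mX_n \in A) L(\mX_m) L(\mX_n)\rt].
\]
The task is then to rewrite each $\mu$-expectation as a $\rho$-expectation using the change-of-measure relation. The guiding principle is that $\ind(\mX_n \in A) L(\mX_n)$ is exactly the Radon--Nikodym weight that converts an $\ee_\mu$ over configurations in $A$ of size $n$ into an $\ee_\rho$; more precisely, for any function $h$ of the first $m$ points with $m \le n$, Eq.~\eqref{eqn:change_of_measure} generalizes to $\ee_\mu[\ind(\mX_n \in A) L(\mX_n) h(\mX_m)] = \ee_\rho[\ind(\mX_n \in A) h(\mX_m)]$, because the sequential construction under $\mu$ is a genuine change of measure on the whole point sequence with likelihood ratio $L(\mX_n)$ once we are inside $A$. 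Applying this with $h(\mX_m) = L(\mX_m)$ to the off-diagonal term gives $\ee_\rho[\ind(\mX_n \in A) L(\mX_m)] = \gamma_{m,n} p_n$ by the very definition~\eqref{eqn:gamma_mn}. For the diagonal term, taking $h \equiv 1$ but keeping one extra factor of $L(\mX_n)$, i.e. writing $\ind(\mX_n\in A)L(\mX_n)^2 = \big(\ind(\mX_n\in A)L(\mX_n)\big)\cdot L(\mX_n)$ and converting, gives $\ee_\mu[\ind(\mX_n \in A) L(\mX_n)^2] = \ee_\rho[\ind(\mX_n \in A) L(\mX_n)] = \gamma_{n,n} p_n$. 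Substituting these back produces Eq.~\eqref{eqn:expand_tilde_Y2}.

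The main obstacle, and the one point that deserves care rather than a one-line wave, is justifying the generalized change-of-measure identity $\ee_\mu[\ind(\mX_n \in A) L(\mX_n) h(\mX_m)] = \ee_\rho[\ind(\mX_n \in A) h(\mX_m)]$ for $m \le n$, since Eq.~\eqref{eqn:change_of_measure} as stated only covers $h \equiv 1$. The cleanest route is to condition on the first $n$ points: under $\mu$ the law of $\mX_n$ restricted to $A$ has density $1/L(\mx_n)$ with respect to the law of $\mX_n$ under $\rho$ restricted to $A$ (this is Eq.~\eqref{eqn:likelihood} read at level $n$, combined with the product structure of $\rho$), so integrating any $\mX_m$-measurable, hence $\mX_n$-measurable, function against $\ind(\mx_n \in A)L(\mx_n)\,\mu(\dd\mx_n)$ equals integrating it against $\ind(\mx_n \in A)\,\rho(\dd\mx_n)$. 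Once this is in hand, both identities follow by bookkeeping, and no convergence issues arise because all summands are nonnegative and bounded by the corresponding terms in the (finite, equal to $p^*$) expansion of $\ee_\rho[Y^2]$, so Tonelli/Fubini applies freely to interchange the sums and expectations.
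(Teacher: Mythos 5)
Your proposal is correct and follows essentially the same route as the paper: Eq.~\eqref{eqn:expand_hat_Y2} is read off from the expansion already obtained in the proof of Proposition~\ref{prop:var_relation}, and Eq.~\eqref{eqn:expand_tilde_Y2} follows from Eq.~\eqref{eqn:tilde_Y_expand} via the identity $\ee_\mu[\ind(\mX_n \in A) L(\mX_m) L(\mX_n)] = \ee_\rho[\ind(\mX_n \in A) L(\mX_m)]$ for $m \le n$ together with the definition of $\gamma_{m,n}$. The only difference is that you spell out the justification of this generalized change-of-measure step (via the density $1/L$ of $\mu$ with respect to $\rho$ on $A$ and the $\mX_n$-measurability of $L(\mX_m)$), which the paper simply asserts ``using the definition of the Radon--Nikodym derivative $L$''; this is a welcome addition rather than a divergence.
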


\begin{proof}
The expression in Eq.~\eqref{eqn:expand_hat_Y2} is established as Eq.~\eqref{eqn:hat_Y_expand} in the proof of Proposition~\ref{prop:var_relation}. To prove Eq.~\eqref{eqn:expand_tilde_Y2}, consider Eq.~\eqref{eqn:tilde_Y_expand}, and then using the definition of the Radon-Nikodym derivative $L$,  we can write for every $m \leq n$ that
\[
\ee_\mu\lt[\ind(\mX_n \in A) L(\mX_m) L(\mX_n)\rt] = \ee_\rho\lt[\ind(\mX_n \in A) L(\mX_m)\rt]. 
\]
Furthermore, using Eq.~\eqref{eqn:gamma_mn}, we obtain
\begin{align*}
\ee_\mu[\wt Y^2] &= \sum_{n \in \intg_+} q_n^2\, p_n\, \frac{\ee_\rho\lt[\ind(\mX_n \in A) L(\mX_{n}) \rt]}{p_n} + 2 \sum_{ m < n } q_n q_m \, p_n\, \frac{\ee_\rho\lt[\ind(\mX_n \in A) L(\mX_{m}) \rt]}{p_n},
\end{align*}
which is identical to Eq.~\eqref{eqn:expand_tilde_Y2} from the definition of $\gamma_{m, n}$ given by Eq.~\eqref{eqn:gamma_mn}.
\end{proof}

To see an implication of Proposition~\ref{prop:gamma_improve}, take $\ell = 0$ in any of the first 
four examples in Section~\ref{sec:prelim}. Then, $A$ is the set of all the hard-spheres configurations. If we further assume that the cell edge length, which is $\lambda/K$, in the grid is selected sufficiently smaller than $1$,
then as in \citep{MJM17}, for every  $\mx_n \in \scG_n \cap A$, we can show that
$\vol(\cB(\mx_n)) \geq n c \, \upsilon_d$,
for a positive constant $c$ with $\upsilon_d$ denoting the volume of a unit radius $d$-dimensional hyper-sphere given by Eq.~\eqref{eqn:unit-sphere-volume}. The value of $c$ can increase with the refinement of the grid used in importance sampling.
Therefore, using the definition of $L(\mX_m)$, for all $m, n$ with $m \leq n$, we have
\begin{align}
\label{eqn:gamma_upper}
\gamma_{m,n} \leq \prod_{i=0}^{m -1} \lt(1 - \frac{i c\, \upsilon_d}{\lambda^d} \rt)^+,
\end{align}
where $r^+ = \max(0, r)$ for any $r \in \reals$. 
Note from Bernoulli's inequality that $ 1 + rt \leq (1 + t)^r$ for all $r \geq 0$ and $t \geq -1$. Therefore, 
\begin{align*}
\gamma_{m,n} \leq \lt(1 - \frac{c \upsilon_d}{\lambda^d}\rt)^{\sum_{i = 1}^m (i-1)} = \lt(1 - \frac{c\upsilon_d}{\lambda^d}\rt)^{m(m-1)/2}.
\end{align*}
Thus, for any $n$, $\gamma_{m, n}$ decays with a rate faster than exponential in $m^2$ to reach zero for large~$m$. As a result, from Proposition~\ref{prop:gamma_improve}, $\ee_\mu[\wt Y^2]$ can be much smaller than $\ee_\rho[\wh Y^2]$, or equivalently,  
$\var_\mu(\wt Y)$ can be much smaller than $\var_\mu(\wh Y)$, as supported by the simulation results in Section~\ref{sec:sim}.

\subsection{Asymptotic Analysis}
\label{sec:asymptotic}
As mentioned above, in this analysis our goal is to theoretically illustrate the limiting performance of the proposed importance sampling method in two asymptotic regimes where $p^*$ approaches~$0$. In the first regime, with a fixed observation window, we show that both the conditional Monte Carlo estimator and the proposed importance sampling estimator are efficient, whereas in the second regime, with a growing window, we show that only the importance sampling estimator retains efficiency.

Towards this end, let $M$ be a integer-valued random variable with
\begin{align}
\pp(M = n) = \frac{q_n p_n}{\sum_{i \in \intg_+} q_i p_i}, \quad n \in \intg_+, \label{eqn:defn_M}
\end{align}
where $q_n$ and $p_n$ are defined in Eq.~\eqref{eqn:defn-beta} and Eq.~\eqref{eqn:defn_pn} respectively. 
In other words, $M$ is the number of points in a realization $\mX$ of the points of the Gilbert graph conditioned on $\mX \in A$.

Suppose an asymptotic regime is parameterized by a non-negative parameter $t \in \intg_+$ such that the rare-event probability $p^* = p^*(t)$ tends to $0$ as $t \to \infty$. 
As an example of such a regime, we can fix both the window size $\lambda$ and threshold $\ell$, and increase the intensity $\kappa$ to $\infty$ as $t \to \infty$. Another regime could be where the intensity $\kappa$ is constant and both $\ell$ and $\lambda$ go to $\infty$. In general, an asymptotic regime consists of changing combination of these parameters such that $p^*$ goes to zero asymptotically. 

It is easy to see that the na\"{i}ve Monte Carlo estimator $Y$, defined in Eq.~\eqref{eqn:naive-Y}, exhibits neither the bounded relative error nor the logarithmic efficiency over any regime where $\lim_{t \to \infty} p^* = 0$. This is true because $\var_\rho(Y) = p^*(1 - p^*)$ and 
\(
{\var_\rho(Y)}/{(p^*)^{(2- \epsilon)}} = {1}/{(p^*)^{(1 - \epsilon)}} - (p^*)^\epsilon 
\)
goes to $\infty$ as $t \to \infty$ for all $0 <\epsilon < 1$. Therefore, the na\"{i}ve Monte Carlo estimator is not efficient over any asymptotic regime where the rare-event probability $p^*$ goes to zero.
Thus, we focus only on the conditional Monte Carlo estimator $\wh Y$, defined in Eq.~\eqref{eqn:Y_hat}, and the importance sampling estimator $\wt Y$, defined in Eq.~\eqref{eqn:Y-tilde}. 

Recall that both $\wh Y$ and $\wt Y$ are unbiased estimators of $p^*$. As a consequence, from Proposition~\ref{prop:var_relation}, we can easily notice that in any regime, if $\wh Y$ has asymptotic logarithmic efficiency (respectively, bounded relative error), then $\wt Y$ also has asymptotic logarithmic efficiency (respectively, bounded relative error). 

\subsubsection{A Regime with a Fixed Sampling Window}
Theorem~\ref{thm:asym_Yhat} establishes the asymptotic efficiency of both $\wh Y$ and $\wt Y$ over an asymptotic regime.  Corollary~\ref{corr:case1} presents an interesting application of this theorem where the window $W$ and the threshold parameter $\ell$ are fixed and the intensity $\kappa$ increases unboundedly.
\begin{theorem}
\label{thm:asym_Yhat}
Consider an asymptotic regime parameterized by $t$, where $\lim_{t \to \infty} p^*  = 0$, and for  each~$t>0$, let $\mc{S}_t \subseteq \intg_+$ be the support of $M$ defined in Eq.~\eqref{eqn:defn_M} and denote its cardinality by $|\mc{S}_t|$.  Furthermore, suppose for all sufficiently large $t>0$ that $|\mc{S}_t| \leq n_0$ for some $n_0 < \infty$ and $\min_{n \in \mc{S}_t} p_n > 0$. Then, the conditional Monte Carlo estimator $\wh Y$ and the importance sampling estimators $\wt Y$ exhibit bounded relative error as $t \to \infty$.
\end{theorem}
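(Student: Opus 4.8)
The plan is to establish bounded relative error first for the conditional Monte Carlo estimator $\wh Y$ and then to transfer it to $\wt Y$ for free. Both estimators are unbiased for $p^*$, so by Proposition~\ref{prop:var_relation} we have $\var_\mu(\wt Y)\le\var_\rho(\wh Y)$ with equal means $p^*$, whence $\var_\mu(\wt Y)/\ee_\mu[\wt Y]^2\le\var_\rho(\wh Y)/\ee_\rho[\wh Y]^2$; thus it suffices to bound the right-hand side. Since $\var_\rho(\wh Y)=\ee_\rho[\wh Y^2]-(p^*)^2$, the goal reduces to showing $\limsup_{t\to\infty}\ee_\rho[\wh Y^2]/(p^*)^2<\infty$.

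First I would rewrite the two relevant second moments as double sums. By Proposition~\ref{prop:gamma_improve}, Eq.~\eqref{eqn:expand_hat_Y2}, and using that $A$ is hereditary with $\mX_m\subseteq\mX_n$ for $m\le n$ (so the events $\{\mX_n\in A\}$ are nested and $\pp_\rho(\mX_n\in A,\ \mX_m\in A)=p_{\max(n,m)}$), we have $\ee_\rho[\wh Y^2]=\sum_{n,m\in\intg_+}q_nq_m\,p_{\max(n,m)}$, whereas trivially $(p^*)^2=\sum_{n,m\in\intg_+}q_nq_m\,p_np_m$. The key idea is then a term-by-term comparison of these two double sums. The hereditary property also makes $n\mapsto p_n$ nonincreasing, with $p_0=1$, so the support $\mc{S}_t=\{n:p_n>0\}$ of $M$ is an initial segment $\{0,1,\dots,n_t\}$ and $p_{\max(n,m)}\le\min(p_n,p_m)$ vanishes as soon as $p_np_m=0$. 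Hence only the indices $n,m\in\mc{S}_t$ contribute, and for those
\[
q_nq_m\,p_{\max(n,m)}=\frac{q_nq_m\,p_np_m}{p_{\min(n,m)}}\le\frac{q_nq_m\,p_np_m}{\min_{k\in\mc{S}_t}p_k}.
\]
Summing over $n,m$ gives $\ee_\rho[\wh Y^2]\le\bigl(\min_{k\in\mc{S}_t}p_k\bigr)^{-1}(p^*)^2$.

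It then remains to control $\min_{k\in\mc{S}_t}p_k$ uniformly in $t$, and this is precisely where the hypotheses are used: $|\mc{S}_t|\le n_0<\infty$ makes the minimum range over a finite index set, while the positivity assumption — which in the regimes of interest, such as the fixed-window/large-intensity setting of Corollary~\ref{corr:case1}, holds because $\mc{S}_t$ and the $p_n$ do not depend on $t$ — yields a constant $\delta>0$ with $\min_{k\in\mc{S}_t}p_k\ge\delta$ for all large $t$. This gives $\ee_\rho[\wh Y^2]\le\delta^{-1}(p^*)^2$, i.e. bounded relative error for $\wh Y$, and by the first paragraph the same holds for $\wt Y$. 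I expect this last point — securing a uniform positive lower bound on the smallest nonzero $p_n$ — to be the only genuine obstacle: if $\mc{S}_t$ were infinite the infimum of the $p_n$ could be $0$ (as already happens for a fixed nontrivial hereditary $A$), and if the $p_n$ drifted with $t$ so that $\min_{k\in\mc{S}_t}p_k\to0$, the ratio $\ee_\rho[\wh Y^2]/(p^*)^2$ need not stay bounded; everything else is a short deterministic computation.
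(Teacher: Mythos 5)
Your proof is correct, and while it shares the paper's overall skeleton (reduce to $\wh Y$ via Proposition~\ref{prop:var_relation}, then bound $\ee_\rho[\wh Y^2]/(p^*)^2$), the key estimate is carried out by a genuinely different and arguably cleaner route. The paper writes $\ee_\rho[\wh Y^2]=\ee[p_{N\vee N'}]$ for iid Poisson $N,N'$, sandwiches this between $2\ee[p_N\ind(N'<N)]$ and $2\ee[p_N\ind(N'\le N)]$, and then plays the upper bound $\ee[p_N\ind(N\ge N')]\le \pp(N\le n_t)^2$ against the lower bound $p^*\ge p_{n_t}\pp(N\le n_t)$ so that the Poisson tail factors cancel, yielding a bound of order $1/c^2$ with $c=\min_{n\in\mc S_t}p_n$. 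You instead compare the double sums term by term via the identity $p_{\max(n,m)}=p_np_m/p_{\min(n,m)}$ on $\mc S_t\times\mc S_t$ (valid because $\mc S_t$ is an initial segment, so $\min(n,m)\in\mc S_t$, and terms outside $\mc S_t\times\mc S_t$ vanish in both sums), which gives the sharper bound $\ee_\rho[\wh Y^2]\le c^{-1}(p^*)^2$ with no Poisson-tail bookkeeping. Both arguments use the hypotheses in exactly the same way, and you correctly flag the one genuine subtlety that the paper's write-up also glosses over: the stated condition $\min_{n\in\mc S_t}p_n>0$ must be read as a lower bound that is uniform in $t$ (otherwise $1/c$ could blow up along the regime), which is how it is used in Corollary~\ref{corr:case1}, where $\mc S_t$ and the $p_n$ do not depend on $\kappa$.
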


Before proving Theorem~\ref{thm:asym_Yhat}, we first provide a specific corollary of the theorem.

\begin{corollary}
\label{corr:case1}
Consider the regime where $\lambda$ and $\ell$ are fixed. Then, for all examples stated  in Section~\ref{sec:prelim}, both $\wh Y$ and $\wt Y$ exhibit bounded relative error as the intensity $\kappa \to \infty$.
\end{corollary}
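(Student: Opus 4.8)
The plan is to deduce the corollary directly from Theorem~\ref{thm:asym_Yhat} by taking the asymptotic regime to be $\kappa\to\infty$ (equivalently $\beta=\kappa\lambda^d\to\infty$) with $\lambda$, $\ell$ and $d$ held fixed, and then checking its three hypotheses: $\lim p^\ast=0$, $|\mc S_t|\le n_0$ for some finite $n_0$, and $\min_{n\in\mc S_t}p_n>0$. The key structural remark is that for a fixed window the quantities $p_n=\pp_\rho(\mX_n\in A)$ do not depend on $\kappa$ at all, since $\rho$ places $n$ i.i.d.\ uniform points in $W=[0,\lambda]^d$; only the Poisson weights $q_n$ change with $\kappa$. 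Hence the support $\mc S_t=\{n\in\intg_+:q_np_n>0\}=\{n:p_n>0\}$ is in fact a single set $\mc S$ independent of $t$, and it will suffice to prove that $\mc S$ is finite: this immediately gives $|\mc S_t|\le|\mc S|=:n_0<\infty$ and $\min_{n\in\mc S_t}p_n=\min_{n\in\mc S}p_n>0$, a minimum of finitely many strictly positive numbers ($\mc S$ being nonempty since $0\in\mc S$).

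To show $\mc S$ is finite I would exhibit, for each example, a finite threshold $n^\ast=n^\ast(\lambda,d,\ell)$ such that $A\cap\scG_n=\varnothing$, and hence $p_n=0$, for every $n>n^\ast$; then $\mc S\subseteq\{0,1,\dots,n^\ast\}$. The common ingredient is a geometric packing bound: any set of points of $W$ that are pairwise at distance strictly greater than $1$ (i.e.\ an independent set of the graph) has cardinality at most some finite $N_{\max}=N_{\max}(\lambda,d)$, because the pairwise-disjoint balls of radius $1/2$ about such points all lie in $[-1/2,\lambda+1/2]^d$. Combining this with one elementary combinatorial fact per example yields $n^\ast$: for the edge-count example (including the hard-spheres case $\ell=0$) and the triangle example, a Tur\'an-type supersaturation argument shows that a graph on $n$ vertices with independence number at most $N_{\max}$ has at least $c_1 n^2$ edges, and hence at least $c_2 n^3$ triangles, for constants $c_1,c_2>0$ depending only on $N_{\max}$, which exceeds $\ell$ once $n$ is large; for the maximum-degree example, $\mathsf{Deg}\le\ell$ forces an independent set of size at least $n/(\ell+1)$, so $n\le(\ell+1)N_{\max}$; for the maximum-connected-component example, choosing one representative vertex per component produces an independent set (vertices in distinct components are at distance $>1$), so again $n\le(\ell+1)N_{\max}$; and for the maximum-clique example, the absence of both a clique of size $\ell+2$ and an independent set of size $N_{\max}+1$ bounds $n$ by the Ramsey number $R(N_{\max}+1,\ell+2)$.

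Finally, $\lim_{\kappa\to\infty}p^\ast=0$ is immediate from the finiteness of $n^\ast$: writing $p^\ast=\sum_{n=0}^{n^\ast}q_np_n\le\sum_{n=0}^{n^\ast}q_n=F_{\poi}(n^\ast)$, for fixed $n^\ast$ this Poisson cumulative distribution function tends to $0$ as $\beta=\kappa\lambda^d\to\infty$; since also $p^\ast\ge q_0=e^{-\beta}>0$, the limit is attained in a nontrivial way. With all three hypotheses of Theorem~\ref{thm:asym_Yhat} verified, the theorem gives that both $\wh Y$ and $\wt Y$ have bounded relative error as $\kappa\to\infty$.

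The step I expect to be the main obstacle is making the uniform threshold $n^\ast$ fully rigorous for every example simultaneously while keeping it brief — specifically, turning the ``at least $c_1 n^2$ edges, at least $c_2 n^3$ triangles'' supersaturation heuristic into precise inequalities for the edge-count and triangle cases (the complement form of Tur\'an's theorem handles the edge bound, and a Kruskal--Katona or simple averaging argument upgrades it to a triangle count). The geometric packing bound and the degeneracy, representative-vertex, and Ramsey arguments for the remaining examples are routine. A convenient way to organize the write-up is to isolate, once and for all, the statement ``for each example's event $A$ there is $n^\ast<\infty$ such that no $n$-vertex graph with independence number $\le N_{\max}$ lies in $A$ when $n>n^\ast$'', and then verify this claim case by case.
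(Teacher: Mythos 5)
Your overall route is exactly the paper's: fix $\lambda$ and $\ell$, observe that $p_n=\pp_\rho(\mX_n\in A)$ does not depend on $\kappa$ (only the Poisson weights $q_n$ do), show that the support of $M$ is a fixed finite set $\{0,\dots,n_0\}$ with $\min_n p_n=p_{n_0}>0$, check $\lim_{\kappa\to\infty}p^*=0$, and invoke Theorem~\ref{thm:asym_Yhat}. The paper itself merely asserts the key finiteness claim (``for sufficiently large $n$, every configuration $\mx\in\scG_n$ satisfies $\mx\notin A$''), so your case-by-case substantiation goes beyond what is written there; your arguments for maximum degree, maximum connected component, maximum clique size, and edge count are all sound.

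The one step that fails as written is the triangle case: ``at least $c_1n^2$ edges, and hence at least $c_2n^3$ triangles'' is a non sequitur, since a graph can have $n^2/4$ edges and no triangles at all (a balanced complete bipartite graph); moreover Kruskal--Katona bounds the number of triangles from \emph{above} in terms of the number of edges, not from below, so it cannot rescue that implication. What is true, and what you need, is that bounded independence number itself forces many triangles: with $r=R(N_{\max}+1,3)$, every $r$-subset of the vertices contains a triangle, so averaging gives at least $\binom{n}{3}/\binom{r}{3}$ triangles, which exceeds $\ell$ once $n$ is large. Alternatively, all five examples are handled uniformly and more simply by partitioning $W$ into finitely many cells of diameter at most $1$: once $n$ exceeds $k$ times the number of cells, some cell contains a clique on more than $k$ points, which for a suitable fixed $k$ already violates each of the five events. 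Either repair closes the gap, and the corollary then follows exactly as you describe.
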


\begin{proof}
Note that $\kappa$ appears in the definition of Poisson distribution $\{q_n: n \geq 0\}$; see Eq.~\eqref{eqn:defn-beta}. Hence, $\kappa$ does not influence~$p_n$. 
From the hereditary property, $p_n$ is decreasing in $n$. Furthermore, since the window size $\lambda$ and threshold $\ell$ are fixed, for all  examples stated in Section~\ref{sec:prelim}, for sufficiently large values of $n$, every configuration $\mx \in \scG_n$ satisfies $\mx \notin A$. Thus, there exists $n_0$ such that $p_n = 0$ if and only if $n > n_0$. That is, the support of $M$ is $\{0, \dots, n_0\}$ for all $\kappa > 0$. Thus,  
\(
{  p_M \geq \min_{n = 0, \dots, n_0} p_n = p_{n_0} > 0},
\)
almost surely,
and thus we complete the proof of the corollary using Theorem~\ref{thm:asym_Yhat}.
\end{proof}

\begin{proof}[Proof of Theorem \ref{thm:asym_Yhat}]
We only need to show that $\wh Y$ has bounded relative error as it implies bounded relative error for $\wt Y$.  From Eq.~\eqref{eqn:bdd-rel-err}, we need to show that
$\limsup_{t \rightarrow \infty} \ee_\rho[\wh Y^2]/(p^*)^2 < \infty$. 
Now let $N$ and $N'$ be independent and identically Poisson distributed random variables, with distribution given by Eq.~\eqref{eqn:Fpoi}. Then, with the notation $a \vee b = \max(a, b)$, we get that
\begin{align*}
\ee_\rho[\wh Y^2] &= \sum_{n, m \in \intg_+} q_n q_m p_{n \vee m}\\
                &= \ee\lt[p_{N\vee N'}\rt]\\
                &= \ee\lt[p_{N} \ind(N' < N)\rt] + \ee\lt[p_{N'} \ind(N' \geq N)\rt]\\
                &= \ee\lt[p_{N} \ind(N' < N)\rt] + \ee\lt[p_{N} \ind(N' \leq N)\rt],
\end{align*} 
where in the second expectation of the last equality, we swapped $N$ and $N'$ as they are identically distributed. As a consequence, it follows that
\begin{align}
2\ee\lt[p_{N} \ind(N' < N)\rt] \leq \ee_\rho[\wh Y^2]\leq 2\ee\lt[p_{N} \ind(N' \leq N)\rt]. \label{eqn:ubd_lbd_yhat}
\end{align}
Let $n_t = \max \mc{S}_t$ be the maximum element of $\mc S_t$. 
From the hereditary property in Eq.~\eqref{eqn:hereditary}, for each $t$, $p_n \geq p_{n'}$ whenever $n < n'$. Therefore, $\mc{S}_t = \{0, 1, \dots, n_t\}$ and hence, $\pp(N \in \mc{S}_t) = \pp(N \leq n_t)$.
Note that for each $t$, we have
\begin{align}
\label{eqn:lbd_p*}
p^* = \ee[p_N] = \sum_{n \in \mc{S}_t} q_n p_n \geq p_{n_t} \, \pp(N \leq n_t),
\end{align}
and
\[
\ee\lt[p_{N} \ind(N \geq N')\rt] = \sum_{n \in \mc{S}_t} q_n p_n \pp(n \geq N') \leq \sum_{n \in \mc{S}_t} q_n \pp(n \geq N') \leq \pp(N \leq n_t)^2,
\]
where the first inequality holds because $p_n \leq 1$ and the second inequality is a simple consequence of the fact that $n_t$ is the maximum element of $\mc{S}_t$. Thus, 
\[
\frac{\ee_\rho[\wh Y^2]}{(p^*)^2} \leq \frac{1}{c^2} \frac{\pp(N \leq n_t)^2 }{\pp(N \leq n_t)^2} = \frac{1}{c^2} < \infty,
\]
where $c = \min_{n \in \mc{S}_t} p_n$ and the first inequality holds because $p_{n_t} \geq c$.   
\end{proof}

\subsubsection{A Regime with a Growing Sampling Window}
To demonstrate that over some regimes the importance sampling estimator $\widetilde{Y}$ can be efficient while the conditional Monte Carlo estimator $\widehat{Y}$ fails to be efficient, we now concentrate on the hard-sphere scenario, by setting $\ell = 0$ in Examples \ref{expl:ec} through \ref{expl:mcs} of Section~\ref{sec:prelim}. Note that for this hard-sphere scenario, $p^*$ is the probability that every pair of points in a $\kappa$-homogeneous Poisson point process on the window $W = [0, \lambda]^d$ is separated by at least a unit distance. Recall that $\beta$ is the expected number of nodes of the Gilbert graph, as defined in Eq.~\eqref{eqn:defn-beta}. For the asymptotic regime here, we assume that the volume of the window $W$ is $\beta^\delta$ (that is, $\lambda = \beta^{\delta/d}$) and the intensity $\kappa = \beta^{1 - \delta}$ for some $\delta > 1$, and study the asymptotic behaviors of $\wh Y$ and $\wt Y$ as $\beta \to \infty$. 
Note that, since $\delta > 1$, the volume of the window $W$ increases faster than the expected number of nodes of the graph. 
The limit point $\lim_{\beta \to \infty} p^*$ and the rate of convergence to the limit point vary depending on the value of $\delta$. In particular, Lemma~\ref{lem:lim-pstar} is a result from \cite{MJM17} and it implies that $p^*$ is a rare-event probability for $1 < \delta < 2$ as $\beta \to \infty$. For simplicity of analysis, we assume the ideal case where the importance sampling uses maximal-volume blocking regions.  
\begin{lemma}[Theorem~1 of \cite{MJM17}]
    \label{lem:lim-pstar}
    Suppose $\lambda = \beta^{\delta/d}$ with $\delta > 1$. Then, {  for the hard-sphere case (i.e., setting $\ell = 0$ in Examples \ref{expl:ec} through \ref{expl:mcs}),} 
    \[
    \lim_{\beta \to \infty}\frac{1}{\beta^{2 - \delta}}\log p^* = 
    - \frac{\upsilon_d}{2}, \quad \text{if} \,\, 1 < \delta < 2.
    \]
    and
    \[
    \lim_{\beta \to \infty} p^* = 
    \begin{cases}
        \exp(-\upsilon_d/2), &\quad \text{if} \,\, \delta = 2,\\
        1, &\quad \text{if} \,\, \delta > 2,
    \end{cases}
    \]
\end{lemma}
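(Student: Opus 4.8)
The plan is to reproduce the argument of \cite{MJM17} by reading $p^*$ as $\pp(Z = 0)$, where $Z$ denotes the number of unordered pairs $\{x,y\}$ of distinct points of $\mX$ with $\|x - y\| \le 1$, and then treating separately the three regimes according to the asymptotics of $\ee[Z]$. By the multivariate Mecke equation for the $\kappa$-homogeneous Poisson point process,
\[
\ee[Z] \;=\; \frac{\kappa^2}{2}\int_W\int_W \ind(\|x - y\|\le 1)\,\dd x\,\dd y \;=\; \frac{\kappa^2}{2}\,\upsilon_d\,\vol(W)\,(1 + o(1)),
\]
where the last equality uses the standard boundary estimate $\int_W\int_W \ind(\|x - y\|\le 1)\,\dd x\,\dd y = \upsilon_d\,\vol(W)\,(1 + O(\vol(W)^{-1/d}))$, valid since the side length $\lambda = \beta^{\delta/d}$ of $W$ tends to infinity. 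Because $\kappa^2\,\vol(W) = \beta^{2-\delta}$ and $\kappa = \beta^{1-\delta} \to 0$ by the hypothesis $\delta > 1$ and Eq.~\eqref{eqn:defn-beta}, this gives $\ee[Z] = \frac{1}{2}\,\upsilon_d\,\beta^{2-\delta}\,(1 + o(1))$.

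For $\delta > 2$ the claim is immediate from Markov's inequality: $\ee[Z] \to 0$, hence $1 - p^* = \pp(Z \ge 1) \le \ee[Z] \to 0$. For $\delta = 2$ I would prove that $Z$ converges in distribution to $\poi(\upsilon_d/2)$ by the method of factorial moments, i.e.\ by showing $\ee[Z(Z-1)\cdots(Z-k+1)] \to (\upsilon_d/2)^k$ for every $k$. Here $\ee[Z(Z-1)\cdots(Z-k+1)]$ counts ordered $k$-tuples of distinct close pairs; a second application of the Mecke equation shows that the $k$-tuples involving $2k$ distinct points contribute exactly $(\ee[Z])^k$, whereas every configuration in which two of the pairs share a point carries an extra factor of $\kappa \to 0$ and is therefore negligible, uniformly in $k$. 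Since $\poi(\upsilon_d/2)$ is determined by its moments, it follows that $p^* = \pp(Z = 0) \to e^{-\upsilon_d/2}$.

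For $1 < \delta < 2$ we have $\ee[Z] \to \infty$, so $p^* \to 0$, and the task is to identify the exact exponential rate through matching bounds on $\log p^*$. For the lower bound I would condition on $\mX$ having exactly $n = \lfloor\beta\rfloor$ points and generate them sequentially: the $(i+1)$-st point may avoid the union of the unit balls centred at the $i$ previously placed points, of volume at most $i\,\upsilon_d$, so $p_n \ge \prod_{i=0}^{n-1}(1 - i\,\upsilon_d/\vol(W))$; since $i\,\upsilon_d/\vol(W) \le \upsilon_d\,\beta^{1-\delta} \to 0$ uniformly in $i \le n$, this yields $\log p_n \ge -(1 + o(1))\,\frac{\upsilon_d}{2\,\vol(W)}\,\beta^{2} = -(1 + o(1))\,\frac{\upsilon_d}{2}\,\beta^{2-\delta}$, and combining with $\log q_{\lfloor\beta\rfloor} = -O(\log\beta) = -o(\beta^{2-\delta})$ gives $\log p^* \ge \log(q_{\lfloor\beta\rfloor}\,p_{\lfloor\beta\rfloor}) \ge -(1 + o(1))\,\frac{\upsilon_d}{2}\,\beta^{2-\delta}$. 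For the upper bound I would partition $W$ into $m = \lfloor\vol(W)/v\rfloor$ disjoint sub-cubes of a common volume $v = v(\beta) \to \infty$ growing so slowly that $v = o(\beta^{\delta-1})$ (equivalently $\kappa v \to 0$), for instance $v = \log\beta$; the restrictions of $\mX$ to disjoint cubes are independent, and the event $\{\mX \in A\}$ forces the points inside each cube to be pairwise separated, so $p^* \le (p^{(v)})^{m}$, where $p^{(v)}$ is the same probability computed on a single cube. Keeping only the pair term exactly (and bounding the remaining conditional probabilities by $1$) gives $p^{(v)} \le 1 - \frac{\kappa^2}{2}\,V_v\,e^{-\kappa v}$ with $V_v/v \to \upsilon_d$, so that
\[
\log p^* \;\le\; m\,\log p^{(v)} \;\le\; -\frac{\kappa^2\,\vol(W)}{2}\cdot\frac{V_v}{v}\,(1 + o(1)) \;=\; -(1 + o(1))\,\frac{\upsilon_d}{2}\,\beta^{2-\delta}.
\]
Together the two bounds give $\beta^{-(2-\delta)}\log p^* \to -\upsilon_d/2$.

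The step I expect to be the main obstacle is the upper bound in the regime $1 < \delta < 2$, specifically recovering the \emph{sharp} constant $\upsilon_d/2$ rather than merely $\log p^* \le -c\,\beta^{2-\delta}$ for some $c > 0$: the block decomposition discards all close pairs straddling two cubes and introduces boundary corrections, so one must calibrate the block volume $v$ to grow slowly enough that $\kappa v \to 0$ and the induced error is $o(\beta^{2-\delta})$, yet fast enough that $V_v/v \to \upsilon_d$, and the existence of such a range for $v$ is exactly where $\delta > 1$ is used. A secondary, purely bookkeeping, difficulty is checking at $\delta = 2$ that the ``overlapping'' pair configurations in the factorial moments of $Z$ are negligible uniformly in $k$.
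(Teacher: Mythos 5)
The paper does not prove this lemma at all: it is imported verbatim as Theorem~1 of \cite{MJM17}, so there is no in-paper argument to compare against. Your blind reconstruction is therefore doing genuinely more work than the source, and as a proof strategy it is sound. Reading $p^*$ as $\pp(Z=0)$ for the close-pair count $Z$, the Mecke computation $\ee[Z]=\tfrac{\upsilon_d}{2}\beta^{2-\delta}(1+o(1))$ is correct (using $\kappa^2\vol(W)=\beta^{2-\delta}$ and the $O(\vol(W)^{-1/d})$ boundary correction), the Markov bound settles $\delta>2$, the factorial-moment Poisson limit settles $\delta=2$ (each overlapping-pair configuration indeed picks up at least one extra factor of $\kappa=\beta^{-1}\to 0$; note you only need convergence of the $k$-th factorial moment for each \emph{fixed} $k$, since $\poi(\upsilon_d/2)$ is moment-determined --- the ``uniformly in $k$'' you flag is not required), and for $1<\delta<2$ the two bounds match: the sequential lower bound $p_{\lfloor\beta\rfloor}\ge\prod_{i<\beta}(1-i\upsilon_d/\beta^{\delta})$ combined with $\log q_{\lfloor\beta\rfloor}=-O(\log\beta)=o(\beta^{2-\delta})$ gives the right constant from below, and the block decomposition with $v\to\infty$, $\kappa v\to 0$ (possible precisely because $\delta>1$) gives it from above via $p^{(v)}\le 1-\tfrac{\kappa^2}{2}V_v e^{-\kappa v}$. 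The only loose ends are routine: the window need not partition exactly into cubes of volume $v$ (take $\lfloor\lambda/v^{1/d}\rfloor^d$ full sub-cubes and discard the rest, which only weakens the product bound by a $1+o(1)$ factor in the exponent), and the lower bound on $\pp(Z\ge 1)$ restricted to the two-point event should be stated as $e^{-\kappa v}\tfrac{\kappa^2}{2}V_v$ exactly as you have it. None of these affects the constant $\upsilon_d/2$.
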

Theorem~\ref{thm:le_Ytilde} shows that the importance sampling estimator $\wt Y$ is efficient asymptotically as $\beta \to \infty$ while the conditional Monte Carlo estimate $\wh Y$ is inefficient.

\begin{theorem}
    \label{thm:le_Ytilde}
    Suppose $\lambda = \beta^{\delta/d}$ with $1 < \delta  < 2$. Then, {  for the hard-sphere case (i.e., setting $\ell = 0$ in Examples \ref{expl:ec} through \ref{expl:mcs}),}  as $\beta \to \infty$, the following is true:
    \begin{itemize}
        \item[(i)]  The importance sampling estimator $\wt Y$ exhibits logarithmic efficiency. 
        \item[(ii)] The conditional Monte Carlo estimator $\wh Y$ exhibits neither {  bounded relative error} nor logarithmic efficiency. 
    \end{itemize}
\end{theorem}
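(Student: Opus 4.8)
The plan is to treat $\wh Y$ and $\wt Y$ themselves (efficiency is a per-sample property; the $m$-fold average just rescales the variance by $1/m$), and, since $\ee_\rho[\wh Y]=\ee_\mu[\wt Y]=p^*\to 0$, to replace variances by second moments throughout. Write $\lambda^d=\beta^\delta$. The arithmetic backbone is the elementary inequality $\prod_{i=0}^{n-1}\bigl(1-i\upsilon_d/\beta^\delta\bigr)^+\le p_n=\pp_\rho(\mX_n\in A)$, obtained by bounding the excluded volume of $i$ unit balls by $i\upsilon_d$; for $n$ up to order $\beta$ every factor is positive and $\log p_n\ge-\tfrac{\upsilon_d}{2\beta^\delta}n^2-O(\beta^{3-2\delta})$, where $\beta^{3-2\delta}=o(\beta^{2-\delta})$ precisely because $\delta>1$. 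Combined with Lemma~\ref{lem:lim-pstar}, which gives $|\log p^*|=\tfrac{\upsilon_d}{2}\beta^{2-\delta}(1+o(1))$, and (for part (ii)) the trivial fact $\pp(N\le\lfloor\beta\rfloor)\to\tfrac12$ from the central limit theorem, everything reduces to sharp exponential comparisons.

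For (ii) I would show that $\wh Y$ is not even logarithmically efficient, which a fortiori rules out bounded relative error. From the identity $\ee_\rho[\wh Y^2]=\ee[p_{N\vee N'}]$ with $N,N'$ i.i.d.\ $\poi(\beta)$ (as in the proof of Theorem~\ref{thm:asym_Yhat}) and the monotonicity of $n\mapsto p_n$,
\[
\ee_\rho[\wh Y^2]\ \ge\ p_{\lfloor\beta\rfloor}\,\pp(N\le\lfloor\beta\rfloor)^2\ \ge\ \tfrac15\,p_{\lfloor\beta\rfloor}\qquad\text{for all large }\beta,
\]
and by the backbone inequality $p_{\lfloor\beta\rfloor}\ge\exp\!\bigl(-\tfrac{\upsilon_d}{2}\beta^{2-\delta}(1+o(1))\bigr)$, which dominates $(p^*)^2=\exp(-\upsilon_d\beta^{2-\delta}(1+o(1)))$, so that $\var_\rho(\wh Y)=\ee_\rho[\wh Y^2]-(p^*)^2\ge\tfrac16 p_{\lfloor\beta\rfloor}$ for large $\beta$. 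Hence $|\log\var_\rho(\wh Y)|\le\tfrac{\upsilon_d}{2}\beta^{2-\delta}(1+o(1))$ while $2|\log\ee_\rho[\wh Y]|=\upsilon_d\beta^{2-\delta}(1+o(1))$, so the efficiency ratio converges to $\tfrac12<1$; equivalently $\var_\rho(\wh Y)/(p^*)^{2-\varepsilon}\to\infty$ for every $\varepsilon\in(0,1)$, ruling out both notions.

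For (i) I would start from the exact expansion of Proposition~\ref{prop:gamma_improve},
\[
\ee_\mu[\wt Y^2]=\sum_{n}q_n^2\,\gamma_{n,n}\,p_n+2\!\!\sum_{m<n}\!\!q_m q_n\,\gamma_{m,n}\,p_n,\qquad \gamma_{m,n}\,p_n=\ee_\rho\bigl[L(\mX_m)\ind(\mX_n\in A)\bigr],
\]
and reduce everything to the single estimate $\ee_\rho\bigl[L(\mX_m)\ind(\mX_n\in A)\bigr]\le p_m\,p_n\,e^{o(\beta^{2-\delta})}$, uniformly over $m\le n$ after a routine truncation of the Poisson weights (the tail $n\gtrsim\beta\log\beta$ contributes at most $\pp(N\gtrsim\beta\log\beta)=e^{-\Theta(\beta\log\beta\log\log\beta)}\ll(p^*)^2$). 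Granting this, since $\sum_n(q_np_n)^2+2\sum_{m<n}(q_mp_m)(q_np_n)=\bigl(\sum_nq_np_n\bigr)^2=(p^*)^2$, we obtain $\ee_\mu[\wt Y^2]\le(p^*)^2e^{o(\beta^{2-\delta})}=\exp(-\upsilon_d\beta^{2-\delta}(1+o(1)))$, whence $\ee_\mu[\wt Y^2]/(p^*)^{2-\varepsilon}\le\exp(-\tfrac{\varepsilon}{2}\upsilon_d\beta^{2-\delta}(1+o(1)))\to 0$ for every $\varepsilon>0$, i.e.\ logarithmic efficiency. The displayed estimate says that $L(\mX_m)$ is, on all but a vanishingly rare set of configurations, close to the zero-variance likelihood ratio $L^*(\mX_m)=p_m$ from the remark on optimal importance sampling following Proposition~\ref{prop:var_relation}. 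To prove it one conditions on $\mX_n\in A$ and exploits that the point density $n/\lambda^d\le\beta^{1-\delta}\to 0$ forces the conditioned configuration to be dilute: in the ideal regime assumed here $\cB(\mx)$ is the union of the closed unit balls centred at the points of $\mx$, and with overwhelming conditional probability the total overlap of the unit balls around the first $m$ points is $o(m\upsilon_d)$ simultaneously for every prefix, so $\vol(\cB(\mx_j))=j\upsilon_d(1+o(1))$ for all $j\le m$ and $L(\mX_m)\le\exp(-\tfrac{\upsilon_d}{2\beta^\delta}m^2(1+o(1)))=p_m\,e^{o(\beta^{2-\delta})}$; the residual event (many pairs within distance $2$ among the first $m$ points) is killed by a moderate-deviation estimate for that pair count, and $\gamma_{m,n}\le\gamma_{m,m}$-type monotonicity (equivalently: $L(\mX_m)$ decreases and $\ind(\mX_n\in A)$ increases as the configuration spreads out) supplies the separation into $p_m$ and $p_n$.

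The crux, and the main obstacle, is exactly this last estimate: upgrading the crude worst-case bound $\vol(\cB(\mx_n))\ge c\,n\upsilon_d$ with a dimension-dependent constant $c<1$ (the bound underlying Eq.~\eqref{eqn:gamma_upper}) to the asymptotically sharp constant $1$ valid for typical conditioned configurations, together with moderate-deviation control of the atypical ones. With only the constant $c$ one gets no better than $\ee_\mu[\wt Y^2]\lesssim(p^*)^{1+c}$, which is insufficient for logarithmic efficiency since $1+c<2$. This is the same obstruction that \cite{MJM17} overcome in establishing Lemma~\ref{lem:lim-pstar} for $p^*$ itself, so I would adapt their second-moment / dependency-graph analysis while additionally tracking the first $m$ of the $n$ points; the restriction $1<\delta<2$, which makes $\beta^{3-2\delta}=o(\beta^{2-\delta})$ and $\beta^{2-\delta}=o(\beta)$, is what makes the dilution effective and the Poisson-tail truncations harmless.
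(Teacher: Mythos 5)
Your part (ii) is correct and is essentially the paper's argument, in places slightly cleaner: bounding $\ee_\rho[\wh Y^2]=\ee[p_{N\vee N'}]$ from below by $p_{\lfloor\beta\rfloor}\,\pp(N\le\beta)^2$ via monotonicity of $n\mapsto p_n$ avoids the Bessel-function computation of $\pp(N'<N\le\beta)\to 1/8$ that the paper carries out, and your elementary sequential bound $p_n\ge\prod_{i<n}\bigl(1-i\upsilon_d/\beta^\delta\bigr)^+$ yields the same asymptotic $p_{\lceil\beta\rceil}\ge\exp\bigl(-\tfrac{\upsilon_d}{2}\beta^{2-\delta}(1+o(1))\bigr)$ that the paper extracts from Lemmas~\ref{lem:lower-bound-pn} and~\ref{lem:upper-bound-ELX}. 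The comparison against $(p^*)^{2-\varepsilon}$ for $\varepsilon\in(0,1)$ then matches Eq.~\eqref{eqn:ptheta-by-pstar}.

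Part (i) is where the genuine gap lies. You have correctly located the crux --- upgrading the worst-case constant $c<1$ in $\vol(\cB(\mx_n))\ge c\,n\upsilon_d$ to the sharp constant $1$ for typical configurations, with control of the exceptional event --- but you do not prove it: the target estimate $\ee_\rho[L(\mX_m)\ind(\mX_n\in A)]\le p_m p_n e^{o(\beta^{2-\delta})}$ is only stated, with a plan to adapt \cite{MJM17}. As written, the hardest step of the theorem is missing. Moreover, the route you sketch is harder than necessary: working with the two-index quantities $\gamma_{m,n}p_n$ from Proposition~\ref{prop:gamma_improve} forces you to condition on $\mX_n\in A$ under $\rho$ and then establish both a uniform-in-prefix dilution statement and a correlation-type factorization into $p_m\cdot p_n$, neither of which is routine. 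The paper sidesteps both difficulties: by Cauchy--Schwarz, $\ee_\mu[\wt Y^2]\le\bigl(\sum_n q_n\sqrt{\ee_\mu[L(\mX_n)^2]}\bigr)^2$, so only the single-index ratio $\sqrt{\ee_\mu[L(\mX_n)^2]}/p_n$ is needed (Lemma~\ref{lem:upper-bound}), and this is controlled directly under $\mu$, where the sequential construction gives a conditional bound $\pp_\mu(V_i-V_{i-1}<\upsilon_d\mid X_1,\dots,X_{i-1})\le r_i$ on the volume deficits; a telescoping moment-generating-function argument then gives $\ee_\mu\bigl[\exp\bigl(\tfrac{2}{\beta^\delta}\sum_i(i\upsilon_d-V_i)\bigr)\bigr]=\exp\bigl(O(\beta^{2-\delta(d+1)/d}+\beta^{3-2\delta})\bigr)=e^{o(\beta^{2-\delta})}$, while Jensen's inequality applied to $p_n=\ee_\mu[L(\mX_n)]$ supplies the matching lower bound on $p_n$ in terms of $\ee_\mu[V_i]$, so the unknown means cancel in the ratio. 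To close your proof, replace the $\gamma_{m,n}$ expansion by this Cauchy--Schwarz decoupling and prove the deficit bound; your Poisson-tail truncation beyond $n$ of order $\beta$ is the same device the paper uses at $n>2\beta$.
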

We prove statements (i) and (ii) of Theorem~\ref{thm:le_Ytilde} separately. The following Lemma~\ref{lem:upper-bound} is useful for establishing (i).

\begin{lemma}
\label{lem:upper-bound}
Suppose $\lambda = \beta^{\delta/d}$ with $1 < \delta  < 2$. Then, {  for the hard-sphere case (i.e., setting $\ell = 0$ in Examples \ref{expl:ec} through \ref{expl:mcs}),} 
\[
\sup_{n \leq 2\beta} \frac{\sqrt{\ee_\mu[L(\mX_n)^2]}}{p_n} = \exp\lt( O\lt(\beta^{2 - {\delta(d+1)/d}} + \beta^{3 - 2\delta} \rt)\rt).
\] 
\end{lemma}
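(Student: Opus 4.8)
The plan is to prove the bound uniformly in $n\le 2\beta$ by carrying out all estimates directly under the importance‑sampling measure $\mu$, using its explicit sequential description — in the hard‑sphere/maximal‑blocking regime, $X_{i+1}$ is uniform on $W\setminus\cB(\mX_i)$ with $\cB(\mX_i)=\bigl(\bigcup_{j\le i}B(X_j,1)\bigr)\cap W$ — rather than rewriting $\ee_\mu[L(\mX_n)^2]$ as an expectation under the hard‑core conditional law $\pp_\rho(\,\cdot\mid\mX_n\in A)$; the latter route leads to exponential moments of a hard‑core Gibbs measure, whereas under $\mu$ every quantity can be conditioned step by step. First I would observe that for $n\le 2\beta$ and $\beta$ large the $\mu$‑process surely reaches $n$ points, since $\vol(\cB(\mX_i))\le i\upsilon_d\le 2\beta\upsilon_d\ll\lambda^d=\beta^{\delta}$; hence $L(\mX_n)=\prod_{i=0}^{n-1}\phi_i$ with $\phi_i:=1-\vol(\cB(\mX_i))/\lambda^d$, the configuration $\mX_n$ is automatically a packing, and $\ee_\mu[L(\mX_n)^2]=\ee_\mu\bigl[\prod_i\phi_i^2\bigr]$.

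The pointwise heart of the argument is a Bonferroni estimate: since the $X_j$ are pairwise at distance $>1$, $\vol(\cB(\mX_i))\ge i\upsilon_d-R_i$, where $R_i:=\sum_{j\le i}b_j+\sum_{j<k\le i}o_{jk}\ge 0$, with $b_j:=\upsilon_d-\vol(B(X_j,1)\cap W)\le\upsilon_d\,\ind(X_j\in\partial_1 W)$ the boundary deficit ($\partial_1 W$ being the set of points of $W$ within distance $1$ of $\partial W$) and $o_{jk}:=\vol(B(X_j,1)\cap B(X_k,1)\cap W)\le\upsilon_d\,\ind(\|X_j-X_k\|<2)$ the pairwise overlap. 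Since $\phi_i\le 1-(i\upsilon_d-R_i)/\lambda^d\le e^{-i\upsilon_d/\lambda^d+R_i/\lambda^d}$ (using $1+x\le e^{x}$), one gets $L(\mX_n)^2\le\exp\bigl(-\tfrac{2\upsilon_d}{\lambda^d}\binom{n}{2}+\tfrac{2}{\lambda^d}\sum_i R_i\bigr)$. On the other hand, the elementary inequality $p_n\ge\prod_{i=1}^{n-1}(1-i\upsilon_d/\lambda^d)$ — obtained by revealing the $\rho$‑points one by one and using $\vol(\cB(\mX_i)\cap W)\le i\upsilon_d$ — together with $\log(1-x)\ge-x-x^2$, $\sum_{i\le 2\beta}i^2=O(\beta^3)$ and $\lambda^{2d}=\beta^{2\delta}$, gives $p_n^2\ge\exp\bigl(-\tfrac{2\upsilon_d}{\lambda^d}\binom{n}{2}-O(\beta^{3-2\delta})\bigr)$. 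Combining these with $\sum_{i=1}^{n-1}R_i\le 2\beta\,(B+O)$, where $B:=\sum_j b_j$ and $O:=\sum_{j<k}o_{jk}$, I obtain $\ee_\mu[L(\mX_n)^2]/p_n^2\le\exp(O(\beta^{3-2\delta}))\,\ee_\mu\bigl[\exp(\tfrac{4\beta}{\lambda^d}(B+O))\bigr]$, which by Cauchy–Schwarz reduces to bounding $\ee_\mu[\exp(\tfrac{8\beta}{\lambda^d}B)]$ and $\ee_\mu[\exp(\tfrac{8\beta}{\lambda^d}O)]$ separately.

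For the boundary term I would use $B\le\upsilon_d Z$ with $Z:=\#\{j:X_j\in\partial_1 W\}$ and reveal the $\mu$‑points sequentially: given $\mX_i$, the probability that $X_{i+1}\in\partial_1 W$ is at most $\vol(\partial_1 W)/(\lambda^d-i\upsilon_d)\le 2\theta$, where $\theta:=\vol(\partial_1 W)/\lambda^d\le 2d/\lambda=O(\beta^{-\delta/d})$, so $\ee_\mu[e^{sZ}]\le(1+2(e^s-1)\theta)^n\le\exp(4ns\theta)$ for $s\downarrow0$; with $s=8\upsilon_d\beta^{1-\delta}$ this is $\exp(O(n\theta\beta^{1-\delta}))=\exp(O(\beta^{\,2-\delta-\delta/d}))=\exp(O(\beta^{\,2-\delta(d+1)/d}))$. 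For the overlap term, $O\le\upsilon_d P$ with $P:=\#\{\{j,k\}:\|X_j-X_k\|<2\}$; the key structural input is that in any packing a point has at most a dimensional constant $D_{\max}=D_{\max}(d)$ other points within distance $2$, so with $D_{i+1}^-:=\#\{j\le i:\|X_j-X_{i+1}\|<2\}\le D_{\max}$ and the convexity linearization $(1+a')^m\le 1+c'm$ for $0\le m\le D_{\max}$ (where $a'=e^{t'}-1$, $t'=8\upsilon_d\beta^{1-\delta}$, $c'=\tfrac{(1+a')^{D_{\max}}-1}{D_{\max}}=O(a')$), sequential conditioning gives $\ee_\mu[(1+a')^{D_{i+1}^-}\mid\mX_i]\le 1+c'\,\ee_\mu[D_{i+1}^-\mid\mX_i]\le 1+c'\,\tfrac{2^{d+1}\upsilon_d i}{\lambda^d}$, whence $\ee_\mu[\exp(t'P)]=\ee_\mu[(1+a')^P]\le\exp\bigl(c'\,\tfrac{2^{d+1}\upsilon_d}{\lambda^d}\binom{n}{2}\bigr)=\exp(O(\beta^{1-\delta}\beta^{2-\delta}))=\exp(O(\beta^{3-2\delta}))$. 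Assembling the three exponents and taking a square root gives the asserted bound, with all $O$‑constants depending only on $d$ and hence uniform over $n\le 2\beta$.

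The step I expect to be the main obstacle is the overlap term: one must avoid the crude bound $P\le\binom{n}{2}$ (which only yields $\exp(O(\beta^{3-\delta}))$, too large) and instead combine the bounded local degree of packings with the vanishing density $\kappa=\beta^{1-\delta}\to0$ through the sequential conditioning, while tracking $\beta$‑powers carefully so that the two error terms come out exactly as $\beta^{2-\delta(d+1)/d}$ (from $n\cdot\theta\cdot\beta/\lambda^d$) and $\beta^{3-2\delta}$ (from $\binom{n}{2}\cdot(\beta/\lambda^d)\cdot(\upsilon_d/\lambda^d)$). A minor point, worth stating once, is the identification of the algorithmic weight $L_n$ with the Radon–Nikodym derivative $L$ and the fact that the $\mu$‑process does not terminate before step $n$ — both harmless for $n\le 2\beta$ and large $\beta$.
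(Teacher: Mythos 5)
Your argument is correct, lands on exactly the two error exponents in the statement, and is uniform over $n\le 2\beta$ as required, but it reaches the bound by a route that differs from the paper's in both of its key steps. For the cancellation of the leading term, the paper centres $V_i=\vol(\cB(\mX_i))$ at its $\mu$-expectation: it writes $\ee_\mu[L(\mX_n)^2]\le\exp\bigl(-\tfrac{2}{\beta^\delta}\sum_i\ee_\mu[V_i]\bigr)\,\ee_\mu\bigl[\exp\bigl(\tfrac{2}{\beta^\delta}\sum_i(i\upsilon_d-V_i)\bigr)\bigr]$ and matches this against the Jensen lower bound $p_n\ge\exp\bigl(-\tfrac1{\beta^\delta}\sum_i\ee_\mu[V_i]\bigr)\exp(-O(\beta^{3-2\delta}))$ of Lemma~\ref{lem:lower-bound-pn}, so that the unknown quantity $\sum_i\ee_\mu[V_i]$ cancels in the ratio; you instead cancel the deterministic reference $\exp\bigl(-\tfrac{\upsilon_d}{\lambda^d}\binom{n}{2}\bigr)$ using the pointwise bounds $V_i\ge i\upsilon_d-R_i$ (Bonferroni) and $p_n\ge\prod_i(1-i\upsilon_d/\lambda^d)$, which avoids Jensen and Lemma~\ref{lem:lower-bound-pn} altogether. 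For the fluctuation term, the paper bounds the per-step deficit by $\upsilon_d I_j$ with $I_j=\ind(V_j-V_{j-1}<\upsilon_d)$ a single Bernoulli per step whose conditional success probability $r_j$ already aggregates the boundary and overlap contributions, so its exponential moment peels off one factor per step with no further geometric input; your Bonferroni surrogate $\sum_i R_i\le 2\beta(B+O)$ forces you to control the moment generating function of the pair count $P$, which is only possible because of the packing degree bound $D_{\max}(d)\le 5^d$ together with the convexity linearisation $(1+a')^m\le 1+c'm$ on $\{0,\dots,D_{\max}\}$ --- an extra (correct, and correctly identified by you as the crux, since $P\le\binom{n}{2}$ alone is far too weak) geometric ingredient the paper does not need. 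What your version buys is transparency and elementariness: the two exponents $\beta^{2-\delta(d+1)/d}$ and $\beta^{3-2\delta}$ are visibly the boundary-layer and pairwise-overlap contributions, and every estimate is pointwise or a one-step conditional bound. What the paper's version buys is economy (one indicator per step, no Cauchy--Schwarz split, no kissing-number bound) and reusability, since its Lemma~\ref{lem:lower-bound-pn} is invoked again in the proof of Theorem~\ref{thm:le_Ytilde}(ii).
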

In the lemma, $O(\cdot)$ denotes the standard big $O$ notation: A function $h(\beta)$ said to be $O(g(\beta))$ for a non-negative function $g(\beta)$ if ${\limsup_{\beta \to \infty} h(\beta)/g(\beta)}$ is finite. A proof of the lemma is  provided in Appendix~\ref{app:add-proofs}. 
 
\begin{proof}[Proof of Theorem~\ref{thm:le_Ytilde} (i)]
Because of  Eq.~\eqref{eqn:log-eff}, we need to show that 
\begin{align}
\lim_{\beta \to \infty} \frac{\ee_\mu[\wt Y^2]}{(p^*)^{2-\epsilon}} = 0, \label{eqn:log-eff-2}
\end{align}
for all $\epsilon > 0$. 
Under the importance sampling, we have $L(\mX_n) = 0$ if and only if $\mX_n \notin A$. Thus,
$\wt Y =  \sum_{n\in \intg_+} q_n L(\mX_n).$
Furthermore, using the Cauchy–Schwarz inequality, we get that
\begin{align*}
    \ee_\mu[\wt Y^2] 
    &= \sum_{n, m \in \intg_+} q_n q_m \ee_\mu[L(\mX_n) L(\mX_m)]\\
    &\leq \sum_{n, m \in \intg_+} q_n q_m \sqrt{\ee_\mu[L(\mX_n)^2]} \sqrt{\ee_\mu[L(\mX_m)^2]}
    = \lt(\sum_{n\in \intg_+} q_n \sqrt{\ee_\mu[L(\mX_n)^2]}\rt)^2.
\end{align*}
Thus, we establish Eq.~\eqref{eqn:log-eff-2} by showing that for any $\epsilon > 0$,
\begin{align}
    \lim_{\beta \to \infty} \lt[\frac{1}{(p^*)^{1 - \epsilon}}\sum_{n\in \intg_+} q_n \sqrt{\ee_\mu[L(\mX_n)^2]}\rt] = 0.
    \label{eqn:equiv-limit}
\end{align}
Now for each $\beta$, if we take $n_\beta = \max\{ n \in \intg_+: p_n \neq 0\}$, then $p_n \neq 0$ if and only if $n \leq n_\beta$. Since $p^* = \sum_{n\in \intg_+} q_n p_n$, with the notation $a \wedge b = \min(a, b)$, we have 
\begin{align}
\frac{1}{p^*} \sum_{n\in \intg_+} q_n \sqrt{\ee_\mu[L(\mX_n)^2]} &=
    \frac{1}{\sum_{n\in \intg_+} q_n p_n} \sum_{n\in \intg_+} q_n \sqrt{\ee_\mu[L(\mX_n)^2]}\nonumber\\ 
    &\leq \frac{1}{\sum_{n \leq  2\beta} q_n p_n} \sum_{n \leq  2\beta} q_n p_n \frac{\sqrt{\ee_\mu[L(\mX_n)^2]}}{p_n} \nonumber\\
    &\hspace{2cm} + \frac{1}{p^*}\sum_{n > 2\beta} q_n \sqrt{\ee_\mu[L(\mX_n)^2]}\nonumber\\
    &\leq \ee\lt[\frac{1}{p_M} \sqrt{\ee_\mu[L(\mX_M)^2]} \,\,\bigg|\,\, M \leq 2\beta\rt] 
    \nonumber\\ 
    &\hspace{2cm}
    + \frac{1}{p^*}\sum_{n > 2\beta} q_n \sqrt{\ee_\mu[L(\mX_n)^2]}, \label{eqn:split-expression}
\end{align}
where $M$ is distributed as Eq.~\eqref{eqn:defn_M}. From Lemma~\ref{lem:upper-bound}, we get that
\[
\ee\lt[\frac{1}{p_M}\sqrt{\ee_\mu[L(\mX_M)^2]} \, \bigg|\, M \leq  2\beta\rt] = \exp\lt( O\lt(\beta^{2 - {\delta(d+1)/d}} + \beta^{3 - 2\delta} \rt)\rt).
\]
From Lemma~\ref{lem:lim-pstar}, $
p^* = \exp\lt(-\frac{\upsilon_d \beta^{2 - \delta}}{2} (1 + o(1)) \rt),
$
and thus for any $\epsilon > 0$,
\begin{align}
    (p^*)^{\epsilon} \ee\lt[\frac{1}{p_M}\sqrt{\ee_\mu[L(\mX_M)^2]} \, \bigg|\, M \leq  2\beta\rt] &= \exp( -O(\beta^{2 - \delta})) \exp\lt( O\lt(\beta^{2 - {\delta(d+1)/d}} + \beta^{3 - 2\delta} \rt)\rt)\nonumber\\
    &\longrightarrow 0, \label{eqn:first-limit}
\end{align}
as $\beta \to \infty$, 
because $2 - \delta > 2- {\delta(d+1)/d}$ and $2 - \delta > 3 - 2\delta$. 
Furthermore, since $\pp(N > 2\beta) = \sum_{n > 2\beta} q_n$ is the tail probability of a Poisson random variable with mean $\beta$, from the Chernoff bound it follows that $\pp(N > 2\beta) \leq \exp(-c\beta)$ for some fixed constant $c > 0$; see, e.g., \cite{short2013improved}. Thus, for all $\epsilon>0$,
\begin{align}
\frac{\sum_{n >  2\beta} q_n \sqrt{\ee_\mu[L(\mX_n)^2]}}{(p^*)^{1- \epsilon}} &\leq \frac{\pp(N > 2\beta)}{(p^*)^{1- \epsilon}}\nonumber\\
&\leq  \exp(- c \beta) \exp\lt(\frac{\upsilon_d \beta^{2 - \delta}(1 - \epsilon)}{2} (1 + o(1)) \rt) 
\longrightarrow 0, 
\label{eqn:second-limit}
\end{align}
as $\beta \to \infty$, because $2 - \delta < 1$. From Eqs.~\eqref{eqn:first-limit}, \eqref{eqn:second-limit} and \eqref{eqn:split-expression}, we obtain Eq.~\eqref{eqn:equiv-limit} and thus Eq.~\eqref{eqn:log-eff-2}.
\end{proof}

\begin{proof}[Proof of Theorem~\ref{thm:le_Ytilde} (ii)]
Since the bounded relative error implies the logarithmic efficiency, it is enough to show that 
$\limsup_{\beta \to \infty} \lt(\ee_\mu[\wh Y^2]/(p^*)^{2-\epsilon}\rt) = \infty$ for some $\epsilon > 0$.
Let $N$ and $N'$ are iid Poisson random variables with distribution given by Eq.~\eqref{eqn:Fpoi}. Then,  
\begin{align*}
    \ee\lt[p_{N} \ind(N' < N)\rt] &\geq \ee\lt[p_{N} \ind(N' < N \leq \beta)\rt] 
    \geq p_{\lceil \beta \rceil}\pp(N' < N \leq \beta ),
\end{align*}
where $\lceil a \rceil$ denotes the smallest integer greater than $a$ and the last inequality follows from the fact that $p_n$ decreases with $n$. Therefore, from Eq.~\eqref{eqn:ubd_lbd_yhat}, for any $\epsilon > 0$, we get that
\begin{align}
\frac{\ee_\rho\lt[\wh Y^2\rt]}{(p^*)^{2 - \epsilon}} \geq 2 \frac{p_{\lceil \beta \rceil}}{(p^*)^{2-\epsilon}} \pp(N' < N \leq \beta ). \label{eqn:log-eff-yhat2}
\end{align}
We complete the proof by showing that $\pp(N' < N \leq \beta ) \to 1/8$ and $p_{\lceil \beta \rceil}/(p^*)^{2-\epsilon} \to \infty$ as $\beta \to \infty$.
Note that 
\[
\pp(\max(N', N) \leq \beta) = \pp(N' < N \leq \beta) + \pp(N < N' \leq \beta) + \pp(N' = N, N \leq \beta).
\]
Since $N'$ and $N$ are independent and identically distributed, $\pp(\max(N', N) \leq \beta) = \pp(N \leq \beta)^2$ and the first two terms on the right of the above expression are identical. Thus, 
\begin{align}
\pp(N' < N \leq \beta) = \frac{1}{2}\lt(\pp(N \leq \beta)^2 - \pp(N' = N, N \leq \beta)\rt).
\label{eqn:nn_theta2}
\end{align}
It is easy to see that 
\[
\pp(N' = N, N \leq \beta) \leq \pp(N' = N) = \exp(- 2\beta) \sum_{n \in \intg_+} \beta^{2n}/(n!)^2,
\]
where the summation term is well-known as the modified Bessel function of {  the first kind of order zero}, denoted as $I_0(2\beta)$. Hence, $\pp(N' = N) = \exp(- 2\beta) I_0(2\beta)$.
From \cite{kasperkovitz1980asymptotic}, we get that $\exp(- 2\beta) I_0(2\beta) \to 0$ as $\beta \to \infty$. We also know from \cite{short2013improved} that 
$\lim_{\beta \to \infty} \pp(N \leq \beta)
= 1/2$.
Therefore, from Eq.~\eqref{eqn:nn_theta2}, it follows that 
$\lim_{\beta \to \infty}\pp(N' < N \leq \beta) = 1/8$.

Now to complete the proof, {  from Eq. \eqref{eqn:log-eff-yhat2}}, it remains to show that 
\[
\lim_{\beta \to \infty} \frac{p_{\lceil \beta \rceil}}{(p^*)^{2-\epsilon}} = \infty.
\]
Since {  the blocking volume added $i$ points is at most $i\upsilon_d$}, from Lemmas~\ref{lem:lower-bound-pn} and \ref{lem:upper-bound-ELX} in Appendix~\ref{app:add-proofs}, we have
\begin{align*}
    p_{\lceil \beta \rceil} 
    &\geq \exp\lt(- \frac{1}{\beta^{\delta}} \sum_{i = 1}^{\lceil\beta \rceil - 1} i \upsilon_d\rt) \exp( - O(\beta^{3 - 2\delta}))\\
    &= \exp\lt(- \frac{\lceil\beta \rceil (\lceil\beta \rceil - 1)\upsilon_d}{2\beta^{\delta}} \rt) \exp( - O(\beta^{3 - 2\delta})).
\end{align*}
From Lemma~\ref{lem:lim-pstar}, we know that $
p^* = \exp\lt(-\frac{ \beta^{2 - \delta} \upsilon_d}{2} (1 + o(1)) \rt)$ for $\delta > 1$. Thus, 
\begin{align}
    \frac{p_{\lceil \beta \rceil}}{(p^*)^{2-\epsilon}}
    &\geq \exp\lt(\frac{\beta^{2 - \delta}\upsilon_d}{2} (2 - \epsilon)(1 + o(1)) - \frac{\lceil\beta \rceil (\lceil\beta \rceil - 1)\upsilon_d}{2\beta^{\delta}} \rt) \exp( - O(\beta^{3 - 2\delta})). \label{eqn:ptheta-by-pstar}
\end{align}
Since $\lceil\beta \rceil (\lceil\beta \rceil - 1) = O(\beta^2)$ and $2 - \delta > 3 - 2 \delta$, by selecting $\epsilon < 1$, we see that the limit of the term on the right-hand side of Eq.~\eqref{eqn:ptheta-by-pstar} tends to infinity as $\beta \to \infty$. 
\end{proof}

\section{Simulation Results}
\label{sec:sim}
In this section, we illustrate the effectiveness of our importance sampling estimator by comparing it to the na\"{i}ve and conditional Monte Carlo estimators through simulation results in {eight} different settings{, covering the edge count, maximum degree, maximum connected component, maximum clique size, number of triangles, and planarity examples}. The Python implementation of our simulation study is available at \url{https://github.com/saratmoka/RareEvents-RandGeoGraphs}.

We recall that $Z_{\tt NMC}$ is the sample mean na\"{i}ve rejection estimator of $\pp(\mX \in A)$, defined by Eq. \eqref{nai.est.def}. That is, $Z_{\tt NMC} = (Y_1+ \cdots + Y_m)/m$, where $Y_1, \dots, Y_m$ are iid copies of $Y$. Similarly, the sample mean conditional Monte Carlo estimator $Z_{\tt CMC}$ and the sample mean importance sampling estimator $Z_{\tt IS}$ are defined by Eq. \eqref{cmc.est.def} and Eq. \eqref{def.wt.zet}, respectively. We use $\mathsf{RV}_{\tt NMC}$, ${\sf RV}_{\tt CMC}$ and ${\sf RV}_{\tt IS}$ to denote the estimated relative variances of $Y$, $\wh Y$ and $\wt Y$, respectively. For instance, 
\[
{\sf RV}_{\tt CMC} = \frac{\frac{1}{m}(\wh Y_1^2+ \cdots + \wh Y_m^2)}{Z^2_{\tt CMC}} -1,
\]
where $\wh Y_1, \dots, \wh Y_m$ are samples used for $Z_{\tt CMC}$. Similarly, we compute ${\sf RV}_{\tt NMC}$ and ${\sf RV}_{\tt IS}$. To ensure that all the sample mean estimators $Z_{\tt NMC}$, $Z_{\tt CMC}$ and $Z_{\tt IS}$ have the same confidence intervals, the number of samples $m$ is selected such that estimates of the relative variances of these sample mean estimators fall below a small fixed value ($0.001$ in our numerical studies). For instance, since ${\sf RV}_{\tt CMC}/m$ is an estimate of the relative variance of the conditional Monte Carlo estimator $Z_{\tt CMC}$, copies of $\wh Y$ are generated until the value of ${\sf RV}_{\tt CMC}/m$ becomes smaller than $0.001$.

\begin{experiment} 
\label{exper-1}
\normalfont
The goal of this numerical study is to estimate the probability of no edges in the Gilbert graph (i.e., hard-spheres model) at different values of the intensity on a fixed window. For that, we take $W = [0,10]^2$ and $\ell = 0$ in Examples {\ref{expl:ec} -- \ref{expl:mcs}} from Section~\ref{sec:prelim}. Table~\ref{tab:relVar_NMC_CMC} presents results for the na\"{i}ve and conditional Monte Carlo estimators for different values of the intensity~$\kappa$. Table~\ref{tab:relVar_ISMC} presents the corresponding results for the importance sampling estimator at different grid sizes.

From these results, we observe that the relative variance (i.e., variance divided by the square of the rare-event probability) of the importance sampling estimator is substantially smaller than that of the other two estimators. For instance, when the intensity is $0.4$, the relative variance of the importance sampling estimator is more than $1000$ times better than that of the conditional Monte Carlo, which is in turn $1000$ times better than the na\"{i}ve Monte Carlo estimator. This means we can make stable and reliable estimates of $\pp(\mX \in A)$ using few samples of $\wt Y$ compared to the other two approaches.

\renewcommand{\thetable}{\arabic{table}a}
\begin{table*}[h]
\caption{Estimated mean and relative variances of the na\"{i}ve Monte Carlo estimator $Y$ and the conditional Monte Carlo estimator $\wh Y$ for Numerical Study~\ref{exper-1}.}
\centering
\label{tab:relVar_NMC_CMC}
\begin{tabular}{@{}lllll@{}}
    \hline
     & \multicolumn{2}{l}{Na\"{i}ve Monte Carlo}
            & \multicolumn{2}{l}{Conditional Monte Carlo}\\
    \cmidrule{1-5}
    $\kappa$    &  $Z_{\tt NMC}$ &   ${\sf RV}_{\tt NMC}$   &   $Z_{\tt CMC}$  &   ${\sf RV}_{\tt CMC}$  \\   
    \hline
$0.1$   &   $0.31$  &   $2.24$  &  $0.31$  &   $0.96$ \\
    \hline
$0.2$   &   $1.81\times 10^{-2}$  &   $54.28$  &   $1.86\times 10^{-2}$  &   $9.91$  \\
    \hline
$0.3$  &   $3.62\times 10^{-4}$    &   $2.76\times 10^{3}$    &  $3.94\times 10^{-4}$     &  $97.1$ \\
    \hline
$0.4$  &   $1.00\times 10^{-6}$    &   $9.99\times 10^{5}$    &  $3.39\times 10^{-6}$     &  $4.16\times 10^{2}$  \\
    \hline
\end{tabular}
\end{table*}
\addtocounter{table}{-1}
\renewcommand{\thetable}{\arabic{table}b}
\begin{table*}[h]
\caption{Estimated mean and relative variances of the importance sampling estimator $\wt Y$ for Numerical Study~\ref{exper-1}, at three different grid sizes.
}
\label{tab:relVar_ISMC}
\centering
\begin{tabular}{@{}lllllll@{}}
\hline
 & \multicolumn{6}{c}{Grid Size}\\
    \cmidrule{2-7}
	   & \multicolumn{2}{l}{$100\times 100$}
            & \multicolumn{2}{l}{$200\times 200$}
                    & \multicolumn{2}{l}{$300 \times 300$}\\
    \hline 
      $\kappa$          &  $Z_{\tt IS}$  &  ${\sf RV}_{\tt IS}$   &   $Z_{\tt IS}$  &  ${\sf RV}_{\tt IS}$  &   $Z_{\tt IS}$  &  ${\sf RV}_{\tt IS}$       \\                   
    \hline
$ 0.1$   &   $0.31$  &   $0.13$  &  $0.31$  &   $0.06$ &   $0.31$  &   $0.04$    \\
    \hline
$0.2$   &   $1.81\times 10^{-2}$  &   $0.54$  &   $1.83\times 10^{-2}$  &   $0.26$  &   $1.83\times 10^{-2}$  &   $0.16$      \\
    \hline
$0.3$  &   $3.57\times 10^{-4}$    &   $1.24$    &  $3.73\times 10^{-4}$     &  $0.55$     &  $3.71\times 10^{-4}$     &  $0.36$        \\
    \hline
$0.4$  &   $3.19\times 10^{-6}$    &   $2.36$    &  $3.52\times 10^{-6}$     &  $0.96$     &  $3.51\times 10^{-6}$     &  $0.63$        \\
    \hline
\end{tabular}
\end{table*}
\renewcommand{\thetable}{\arabic{table}}
\end{experiment}

\begin{experiment}
\label{exper-2}
\normalfont
Here, the goal is to compare the methods for Example~\ref{expl:ec}. Specifically, we estimate the probability that the number of edges in the graph does not exceed a given threshold $\ell$.
For this, we fix $W = [0,20]^2$ and $\kappa = 0.3$ and  vary $\ell$. 
In this setting, the expected number of edges is estimated to be $54$. Since the naïve Monte Carlo estimator requires a prohibitively large number of samples to achieve the same relative error as CMC or IS, we exclude it from this and all following numerical studies. Table~\ref{tab:relVar_ISMC_CMC} compares the conditional Monte Carlo estimator $\widehat{Y}$ and the importance sampling estimator $\widetilde{Y}$. Notably, we observe that the variance of $\widetilde{Y}$ is substantially smaller than that of $\widehat{Y}$ when the rare-event probability is extremely small.
\end{experiment}

\begin{table*}[h]
\caption{ Estimated mean and relative variances of the conditional Monte Carlo estimator $\wh Y$ and the proposed importance sampling estimator $\wt Y$ (with two different grid sizes) for Numerical Study~\ref{exper-2} on the edge count.}
\label{tab:relVar_ISMC_CMC}
\centering
\small
\begin{tabular}{@{}lllllll@{}}
    \hline
	  & \multicolumn{2}{l}{}
            & \multicolumn{4}{c}{Importance Sampling}\\
    \cmidrule{4-7}
	  & \multicolumn{2}{l}{Conditional Monte Carlo}
            & \multicolumn{2}{l}{Grid size: $200 \times 200$}
                    & \multicolumn{2}{l}{Grid size: $300 \times 300$}\\
    \hline 
     $\ell$          &  $Z_{\tt CMC}$  &  ${\sf RV}_{\tt CMC}$   &   $Z_{\tt IS}$  &  ${\sf RV}_{\tt IS}$  &   $Z_{\tt IS}$  &  ${\sf RV}_{\tt IS}$       \\  
    \hline
$5$   &   $5.00\times 10^{-10}$  &   $4.81\times 10^{4}$  &  $4.35\times 10^{-10}$  &   $3.41\times 10^{3}$ &   $4.31\times 10^{-10}$  &   $2.96\times 10^{3}$    \\
    \hline
$10$   &   $2.56\times 10^{-7}$  &   $2.14\times 10^{3}$  &   $2.32\times 10^{-7}$  &   $5.08\times 10^{2}$  &   $1.68\times 10^{-7}$  &   $2.00\times 10^{2}$      \\
    \hline
$15$  &   $1.51\times 10^{-5}$  &   $1.17\times 10^{2}$  &   $1.60\times 10^{-5}$  &   $9.59\times 10^{1}$  &   $1.61\times 10^{-5}$  &   $1.06\times 10^{2}$      \\
    \hline
\end{tabular}
\end{table*}

\begin{experiment}
\label{exper-3}
\normalfont
Our next objective is to compare the methods for Example~\ref{expl:deg} by estimating the probability that the maximum degree of the graph does not exceed a given threshold $\ell$. For this, we set $W = [0, 10]^2$ and vary both the intensity $\kappa$ and the threshold $\ell$. For our importance sampling, we fix the grid size to be $200 \times 200$. Here, we again observe that the relative variance of the importance sampling estimator $\wt Y$ is significantly smaller than that of the conditional Monte Carlo estimator $\wh Y$, see Table~\ref{tab:relVar_exper_3}.

\begin{table*}[h]
\caption{Estimated mean and relative variances of the conditional Monte Carlo estimator $\wh Y$ and the proposed importance sampling estimator $\wt Y$ for Numerical Study~\ref{exper-3} on the maximum degree.} 
\label{tab:relVar_exper_3}
\centering
\begin{tabular}{@{}lllllr@{}}
    \hline
	  &  & \multicolumn{2}{l}{Conditional Monte Carlo}
            & \multicolumn{2}{l}{Importance Sampling}
                    \\
    \hline 
     $\kappa$      & $\ell$    &  $Z_{\tt CMC}$  &  ${\sf RV}_{\tt CMC}$   &   $Z_{\tt IS}$  &  ${\sf RV}_{\tt IS}$ \\  
    \hline
$1$   & $4$ &  $3.34 \times 10^{-3}$ & $47.89$ &   $3.64 \times 10^{-3}$ & $4.54$        \\
    \hline
$1.5$   & $5$ &  $1.22 \times 10^{-5}$ & $6.38 \times 10^{2}$ &   $1.09 \times 10^{-5}$ & $54.09$      \\
    \hline
$2$  & $6$ &   $2.19 \times 10^{-8}$ & $1.19 \times 10^{5}$ &   $2.08 \times 10^{-8}$ & $2.17 \times 10^{2}$         \\
    \hline
\end{tabular}
\end{table*}
\end{experiment}

\begin{experiment}
 
\label{exper-4}
\normalfont
In this numerical study, we compare the methods for Example~\ref{expl:mcc}. Specifically, we estimate the probability that the maximum connected component of the graph does not exceed $\ell + 1$. We fix $W = [0, 10]^2$, $\ell = 1$, and vary the intensity $\kappa$, with the grid size fixed at $200 \times 200$. We observe that the relative variance of the importance sampling estimator $\wt{Y}$ is substantially smaller than that of the conditional Monte Carlo estimator $\wh{Y}$, as shown in Table~\ref{tab:relVar_exper_4}.
\end{experiment}

\begin{table*}[h]
 
\caption{Estimated mean and relative variances of the conditional Monte Carlo estimator $\wh Y$ and the proposed importance sampling estimator $\wt Y$ for Numerical Study~\ref{exper-4} on maximum connected components.}
\label{tab:relVar_exper_4}
\centering
\begin{tabular}{@{}lllll@{}}
    \hline
      & \multicolumn{2}{l}{Conditional Monte Carlo}
            & \multicolumn{2}{l}{Importance Sampling} \\
    \hline
     $\kappa$  &  $Z_{\tt CMC}$  &  ${\sf RV}_{\tt CMC}$   &   $Z_{\tt IS}$  &  ${\sf RV}_{\tt IS}$ \\
    \hline
$0.3$  &  $5.78 \times 10^{-2}$  &  $5.26$  &  $5.81 \times 10^{-2}$  &  $1.27$  \\
    \hline
$0.4$  &  $4.25 \times 10^{-3}$  &  $2.72\times 10^{1}$  &  $4.55 \times 10^{-3}$  &  $3.00$  \\
    \hline
$0.5$  &  $2.01 \times 10^{-4}$  &  $1.15 \times 10^{2}$  &  $2.33 \times 10^{-4}$  &  $6.70$  \\
    \hline
\end{tabular}
\end{table*}

\begin{experiment}
 
\label{exper-5}
\normalfont
In this numerical study, we compare the methods for Example~\ref{expl:mcs}. Specifically, we estimate the probability that the maximum clique size of the graph does not exceed $\ell + 1$. We fix $W = [0, 10]^2$, $\ell = 1$ (i.e., the graph is triangle-free), and vary the intensity $\kappa$, with the grid size fixed at $200 \times 200$. Table~\ref{tab:relVar_exper_5} shows that the importance sampling estimator $\wt Y$ again achieves substantially smaller relative variance than the conditional Monte Carlo estimator $\wh Y$. 
\end{experiment}

\begin{table*}[h]
 
\caption{Estimated mean and relative variances of the conditional Monte Carlo estimator $\wh Y$ and the proposed importance sampling estimator $\wt Y$ for Numerical Study~\ref{exper-5} on maximum clique sizes.}
\label{tab:relVar_exper_5}
\centering
\begin{tabular}{@{}lllll@{}}
    \hline
      & \multicolumn{2}{l}{Conditional Monte Carlo}
            & \multicolumn{2}{l}{Importance Sampling} \\
    \hline
     $\kappa$  &  $Z_{\tt CMC}$  &  ${\sf RV}_{\tt CMC}$   &   $Z_{\tt IS}$  &  ${\sf RV}_{\tt IS}$ \\
    \hline
$0.4$  &  $5.46 \times 10^{-2}$  &  $5.84$  &  $6.24 \times 10^{-2}$  &  $0.62$  \\
    \hline
$0.5$  &  $6.08 \times 10^{-3}$  &  $2.01 \times 10^{1}$  &  $8.75 \times 10^{-3}$  &  $1.15$  \\
    \hline
$0.6$  &  $7.60 \times 10^{-4}$  &  $1.05 \times 10^{2}$  &  $7.87 \times 10^{-4}$  &  $1.87$  \\
    \hline
\end{tabular}
\end{table*}

\begin{experiment}
 
\label{exper-6}
\normalfont
In this numerical study, we compare the methods for Example~\ref{expl:ntg}. Specifically, we estimate the probability that the graph contains no triangles (i.e., $\ell = 0$). We fix $W = [0, 10]^2$ and vary the intensity $\kappa$, with the grid size fixed at $200 \times 200$. As shown in Table~\ref{tab:relVar_exper_6}, the importance sampling estimator $\wt Y$ achieves substantially smaller relative variance than the conditional Monte Carlo estimator $\wh Y$. Note that for $\ell = 1$ in Example~\ref{expl:mcs} and $\ell = 0$ in Example~\ref{expl:ntg}, both rare events correspond to the graph being triangle-free, and the estimates from Numerical Studies~\ref{exper-5} and~\ref{exper-6} are in close agreement.
\end{experiment}

\begin{table*}[h]
 
\caption{Estimated mean and relative variances of the conditional Monte Carlo estimator $\wh Y$ and the proposed importance sampling estimator $\wt Y$ for Numerical Study~\ref{exper-6} on seeing no triangles in the graph.}
\label{tab:relVar_exper_6}
\centering
\begin{tabular}{@{}lllll@{}}
    \hline
      & \multicolumn{2}{l}{Conditional Monte Carlo}
            & \multicolumn{2}{l}{Importance Sampling} \\
    \hline
     $\kappa$  &  $Z_{\tt CMC}$  &  ${\sf RV}_{\tt CMC}$   &   $Z_{\tt IS}$  &  ${\sf RV}_{\tt IS}$ \\
    \hline
$0.4$  &  $5.46 \times 10^{-2}$  &  $5.84$  &  $6.24 \times 10^{-2}$  &  $0.62$  \\
    \hline
$0.5$  &  $6.08 \times 10^{-3}$  &  $2.01 \times 10^{1}$  &  $8.75 \times 10^{-3}$  &  $1.15$  \\
    \hline
$0.6$  &  $7.60 \times 10^{-4}$  &  $1.05 \times 10^{2}$  &  $7.87 \times 10^{-4}$  &  $1.87$  \\
    \hline
\end{tabular}
\end{table*}

\begin{experiment}
 
\label{exper-7}
\normalfont
In this numerical study, we compare the methods for Example~\ref{expl:planar}. Specifically, we estimate the probability that the Gilbert graph is planar. We fix $W = [0, 10]^2$ and vary the intensity $\kappa$, with the grid size fixed at $100 \times 100$. The blocking rule uses a $K_5$- and $K_{3,3}$-based incremental criterion (as described in Section~\ref{sec:is}), while the stopping condition uses a full planarity check. As shown in Table~\ref{tab:relVar_exper_7}, the importance sampling estimator $\wt Y$ achieves substantially smaller relative variance than the conditional Monte Carlo estimator $\wh Y$.
\end{experiment}

\begin{table*}[h]
 
\caption{Estimated mean and relative variances of the conditional Monte Carlo estimator $\wh Y$ and the proposed importance sampling estimator $\wt Y$ for Numerical Study~\ref{exper-7} on estimating the probability of graph being non-planar.}
\label{tab:relVar_exper_7}
\centering
\begin{tabular}{@{}lllll@{}}
    \hline
      & \multicolumn{2}{l}{Conditional Monte Carlo}
            & \multicolumn{2}{l}{Importance Sampling} \\
    \hline
     $\kappa$  &  $Z_{\tt CMC}$  &  ${\sf RV}_{\tt CMC}$   &   $Z_{\tt IS}$  &  ${\sf RV}_{\tt IS}$ \\
    \hline
$1.1$  &  $4.58 \times 10^{-2}$  &  $7.31$  &  $4.12 \times 10^{-2}$  &  $2.08$  \\
    \hline
$1.2$  &  $1.51 \times 10^{-2}$  &  $1.58 \times 10^{1}$  &  $1.30 \times 10^{-2}$  &  $4.01$  \\
    \hline
$1.3$  &  $3.87 \times 10^{-3}$  &  $4.25 \times 10^{1}$  &  $3.37 \times 10^{-3}$  &  $7.27$  \\
    \hline
\end{tabular}
\end{table*}

\begin{experiment}
 
\label{exper-8}
\normalfont
We compare the computational cost of CMC and IS across all six examples at the precision target ${\sf RV}/m < \varepsilon = 0.01$, with $W = [0,10]^2$, $r = 1$, and a $100 \times 100$ IS grid. For the five threshold examples we set $\kappa = 0.65$ at levels $\ell$ yielding $Z \approx 10^{-4}$ (for MD, $\ell = 2$ is the closest feasible choice, giving $Z \approx 9.4 \times 10^{-4}$); for Planarity we use $\kappa = 1.5$. Since NMC is impractical, we compare CMC and IS. As seen in Table~\ref{tab:timing_exper_8}, $m_{\tt IS}/m_{\tt CMC} \approx {\sf RV}_{\tt IS}/{\sf RV}_{\tt CMC}$, consistent with the stopping criterion, and IS achieves wall-clock speedups of $36$--$230\times$ over CMC for the five threshold examples. For Planarity, CMC is infeasible at $Z \approx 10^{-4}$ (extrapolated cost $\approx 498$~s from a 3000-sample pilot), whereas IS converges in $66$~s. The product $\text{Time} \times {\sf RV}$, reported in the last two columns of Table~\ref{tab:timing_exper_8}, is two to four orders of magnitude smaller for IS than for CMC across all examples. Table~\ref{tab:timing_w20} repeats this comparison at the same window $W = [0,10]^2$ with $\kappa = 0.8$ for the threshold examples and $\kappa = 1.7$ for Planarity, yielding $Z \approx 10^{-6}$, one order of magnitude smaller than in Table~\ref{tab:timing_exper_8}. The IS speedups are even more pronounced at this lower probability level, with $\text{Time} \times {\sf RV}$ ratios ranging from $12\times$ to over $1500\times$ smaller for IS than for CMC across the five threshold examples. For Planarity, CMC is again infeasible at $Z \approx 10^{-6}$ (extrapolated cost $\approx 385$~s from a 3000-sample pilot), and IS requires similar wall-clock time ($431$~s) but with an RV five times smaller, resulting in a $4\times$ improvement in $\text{Time} \times {\sf RV}$. All running times were measured on a Macbook Pro with an Apple M4 Pro chip and 24GB of unified memory using a single-core. 
\end{experiment}

\renewcommand{\thetable}{\arabic{table}a}
\begin{table*}[h]
 
\caption{Computational cost comparison of CMC and IS across all six examples at the same precision target (${\sf RV}/m < 0.01$). Settings: $W = [0,10]^2$, $r = 1$, IS grid $100 \times 100$. For threshold examples $\kappa = 0.65$; for Planarity $\kappa = 1.5$. The symbol $\ell$ is as defined in each respective example (not applicable for Planarity). $^\dagger$CMC $m$ and time are extrapolated from a 3000-iteration pilot.}
\label{tab:timing_exper_8}
\centering
\resizebox{\textwidth}{!}{%
\begin{tabular}{@{}lllll llll llll@{}}
    \hline
    & & & & \multicolumn{4}{l}{Conditional MC} & \multicolumn{4}{l}{Importance Sampling} \\
    \hline
    Example & $\kappa$ & $\ell$ & $Z_{\tt CMC}$ & ${\sf RV}_{\tt CMC}$ & $m_{\tt CMC}$ & Time$_{\tt CMC}$ (s) & T$\times$RV$_{\tt CMC}$ & ${\sf RV}_{\tt IS}$ & $m_{\tt IS}$ & Time$_{\tt IS}$ (s) & T$\times$RV$_{\tt IS}$ \\
    \hline
EC (Ex.~\ref{expl:ec})  &  $0.65$  &  $15$  &  $1.15 \times 10^{-4}$  &  $17.6$  &  $10099$  &  $1.4$  &  $24.6$  &  $9.11$  &  $1099$  &  $0.02$  &  $0.18$  \\
    \hline
MD (Ex.~\ref{expl:deg}) &  $0.65$  &  $2$   &  $9.38 \times 10^{-4}$  &  $51.4$  &  $5320$   &  $0.7$  &  $36.0$  &  $6.82$  &  $783$   &  $0.02$  &  $0.13$  \\
    \hline
MCC (Ex.~\ref{expl:mcc}) &  $0.65$  &  $2$   &  $2.48 \times 10^{-4}$  &  $211$   &  $21208$  &  $2.4$  &  $506$   &  $19.9$  &  $2140$  &  $0.02$  &  $0.31$  \\
    \hline
NTG (Ex.~\ref{expl:ntg}) &  $0.65$  &  $0$   &  $1.91 \times 10^{-4}$  &  $189$   &  $19029$  &  $2.0$  &  $378$   &  $5.65$  &  $690$   &  $0.01$  &  $0.05$ \\
    \hline
MCS (Ex.~\ref{expl:mcs}) &  $0.65$  &  $1$   &  $1.91 \times 10^{-4}$  &  $189$   &  $19029$  &  $2.0$  &  $378$   &  $5.65$  &  $690$   &  $0.01$  &  $0.05$ \\
    \hline
Planarity (Ex.~\ref{expl:planar}) &  $1.5$  &  $-$   &  $1.42 \times 10^{-4}$  &  $192$   &  $19241^\dagger$  &  $498^\dagger$  &  $9.6 \times 10^4{}^\dagger$  &  $11.4$  &  $1292$  &  $66.00$  &  $752.00$  \\
    \hline
\end{tabular}
}
\end{table*}
\addtocounter{table}{-1}
\renewcommand{\thetable}{\arabic{table}b}
\begin{table*}[h]
 
\caption{Computational cost comparison of CMC and IS across all six examples at the same precision target (${\sf RV}/m < 0.01$). Settings: $W = [0,10]^2$, $r = 1$, IS grid $100 \times 100$. For threshold examples $\kappa = 0.8$ ($Z \approx 10^{-6}$); for Planarity $\kappa = 1.7$. The symbol $\ell$ is as defined in each respective example (not applicable for Planarity). $^\dagger$CMC $m$ and time are extrapolated from a 3000-iteration pilot.}
\label{tab:timing_w20}
\centering
\resizebox{\textwidth}{!}{%
\begin{tabular}{@{}lllll llll llll@{}}
    \hline
    & & & & \multicolumn{4}{l}{Conditional MC} & \multicolumn{4}{l}{Importance Sampling} \\
    \hline
    Example & $\kappa$ & $\ell$ & $Z_{\tt CMC}$ & ${\sf RV}_{\tt CMC}$ & $m_{\tt CMC}$ & Time$_{\tt CMC}$ (s) & T$\times$RV$_{\tt CMC}$ & ${\sf RV}_{\tt IS}$ & $m_{\tt IS}$ & Time$_{\tt IS}$ (s) & T$\times$RV$_{\tt IS}$ \\
    \hline
EC (Ex.~\ref{expl:ec})  &  $0.8$  &  $15$  &  $1.54 \times 10^{-7}$  &  $119$   &  $12075$   &  $1.7$   &  $202$               &  $20.8$  &  $2174$   &  $0.79$  &  $16.4$  \\
    \hline
MD (Ex.~\ref{expl:deg}) &  $0.8$  &  $2$   &  $1.07 \times 10^{-5}$  &  $508$   &  $50853$   &  $6.6$   &  $3328$              &  $15.2$  &  $1673$   &  $0.97$  &  $14.8$  \\
    \hline
MCC (Ex.~\ref{expl:mcc}) &  $0.8$  &  $2$  &  $1.27 \times 10^{-6}$  &  $1357$  &  $135858$  &  $15.7$  &  $2.1 \times 10^4$   &  $125$   &  $12595$  &  $0.89$  &  $112$   \\
    \hline
NTG (Ex.~\ref{expl:ntg}) &  $0.8$  &  $0$  &  $1.79 \times 10^{-6}$  &  $1396$  &  $139986$  &  $15.5$  &  $2.2 \times 10^4$   &  $15.9$  &  $1699$   &  $0.91$  &  $14.4$  \\
    \hline
MCS (Ex.~\ref{expl:mcs}) &  $0.8$  &  $1$  &  $1.79 \times 10^{-6}$  &  $1396$  &  $139986$  &  $15.0$  &  $2.1 \times 10^4$   &  $15.9$  &  $1699$   &  $1.06$  &  $16.9$  \\
    \hline
Planarity (Ex.~\ref{expl:planar}) &  $1.7$  &  $-$  &  $5.78 \times 10^{-7}$  &  $407$  &  $40670^\dagger$  &  $385^\dagger$  &  $1.6 \times 10^5{}^\dagger$  &  $84.3$  &  $8640$  &  $431$  &  $3.6 \times 10^4$  \\
    \hline
\end{tabular}
}
\end{table*}
\renewcommand{\thetable}{\arabic{table}}

\section{Conclusion}
\label{sec:conclusion}
In this paper, we considered the problem of estimating rare-event probabilities for random geometric graphs, also known as Gilbert graphs. We proposed an easily implementable and efficient importance sampling method for rare-event estimation. Using analysis and simulations, we compared its performance with existing methods: na\"{i}ve and conditional Monte Carlo estimators.  In particular, we showed that the importance sampling estimator always exhibits smaller variance than the other two methods. Furthermore, we established an asymptotic regime where the importance sampling estimator is efficient while the other estimators are inefficient. Our simulation results show that the proposed estimator can have variance thousands of times smaller than the other estimators when the rare-event probabilities are extremely small. 
\medskip

\section*{Declarations}
\noindent {\bf Author Contributions.} SM was primarily responsible for the conception, mathematical proofs, and simulations. CH made substantial contributions by writing the introduction and providing comprehensive literature review and relevant resources. All authors contributed to polishing the structure and presentation of the material.
\medskip

\noindent {\bf Funding}. CH was supported by a research grant (VIL69126) from VILLUM FONDEN.
\medskip

\noindent {\bf Code Availability.} The Python implementation of our numerical methods is available at \url{https://github.com/saratmoka/RareEvents-RandGeoGraphs}. We also released these methods as package available at \url{https://pypi.org/project/pyregg}.
\medskip

\noindent {\bf Conflict of Interest}
Christian Hirsch is a member of the editorial board for Methodology and Computing in Applied Probability.

\begin{appendices}
\section{Proof of Lemma \ref{lem:upper-bound}}\label{app:add-proofs}
We now provide a proof of Lemma~\ref{lem:upper-bound}. For this we use two lemmas stated below: Lemma~\ref{lem:lower-bound-pn} and Lemma~\ref{lem:upper-bound-ELX}. Note that the blocking volumes are monotonically non-decreasing. That is, $B(\mX_{i-1}) \subseteq B(\mX_{i})$ is for each $i$.  Also, the blocking volume added by a point is at most $\upsilon_d$, the volume of a unit radius sphere. In other words, with 
\[
V_i = \vol(B(\mX_i)),
\]
we have $V_i - V_{i-1} \leq \upsilon_d$,
where the equality holds when the center of the $i$-th sphere is one unit away from the boundary of the window and $2$ units away from the centers of the other spheres.
As a consequence, we get that
\begin{align}
\label{eqn:bounds_on_vi}
V_{i}  = \sum_{j = 1}^{i} \lt(V_{j} - V_{j-1}\rt) \leq i \upsilon_d.
\end{align}
Throughout the section, we assume that $\lambda = \beta^{\delta/d}$ with $1 < \delta  < 2$.
\begin{lemma}
    \label{lem:lower-bound-pn}
    For all sufficiently large values of $\beta>0$, it holds that
\begin{align*}
    p_n &\geq \exp\lt( - \frac{1}{\beta^{\delta}}\sum_{i = 1}^{n -1} \ee_\mu[V_i]\rt) \exp\lt( - \sum_{i = 1}^{n -1} \sum_{j = 2}^\infty \frac{(i\upsilon_d)^j}{\beta^{j\delta}}\rt), \quad n \leq 2\beta.
\end{align*}
\end{lemma}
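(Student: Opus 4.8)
The plan is to express $p_n$ exactly in terms of the likelihood ratio under the importance sampling measure and then bound each factor of the resulting product from below. First I would observe that, under $\mu$ with maximal-volume blocking regions in the hard-sphere setting ($\ell=0$), every newly generated point automatically lies at distance more than $1$ from all previously placed points, so $\ind(\mX_n \in A) = 1$ almost surely provided that $n$ points are actually produced. Since $V_i \le i\upsilon_d$ by Eq.~\eqref{eqn:bounds_on_vi} and $\vol(W) = \lambda^d = \beta^\delta$ with $\delta > 1$, for all sufficiently large $\beta$ and all $n \le 2\beta$ we have $V_{n-1} \le (n-1)\upsilon_d < \beta^\delta = \vol(W)$, so the blocking region never fills the whole window and the sampler does generate $n$ points. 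Combining this with the change-of-measure identity Eq.~\eqref{eqn:change_of_measure} (and $p_n = \pp_\rho(\mX_n \in A)$ from Eq.~\eqref{eqn:defn_pn}) and the formula for $L$ in Eq.~\eqref{eqn:radon-nikodym-der}, this gives
\[
p_n = \ee_\mu\lt[\ind(\mX_n \in A)\, L(\mX_n)\rt] = \ee_\mu\lt[\prod_{i=0}^{n-1}\lt(1 - \frac{V_i}{\beta^\delta}\rt)\rt].
\]

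Next I would bound each factor from below. Writing $x_i = V_i/\beta^\delta$, the bounds above ensure $0 \le x_i < 1$, so the expansion $\log(1-x_i) = -\sum_{j\ge 1} x_i^j/j$ is valid, and using $1/j \le 1$ for $j \ge 2$ yields $\log(1-x_i) \ge -x_i - \sum_{j \ge 2} x_i^j$. Using $V_0 = 0$ and the deterministic bound $V_i \le i\upsilon_d$ only on the higher-order terms, one obtains pointwise
\[
\prod_{i=0}^{n-1}\lt(1 - \frac{V_i}{\beta^\delta}\rt) \ge \exp\lt(-\frac{1}{\beta^\delta}\sum_{i=1}^{n-1} V_i\rt)\exp\lt(-\sum_{i=1}^{n-1}\sum_{j=2}^\infty \frac{(i\upsilon_d)^j}{\beta^{j\delta}}\rt).
\]
Taking $\ee_\mu$, the second exponential is deterministic and factors out, while Jensen's inequality applied to the convex function $\exp$ gives $\ee_\mu\bigl[\exp(-\beta^{-\delta}\sum_{i=1}^{n-1}V_i)\bigr] \ge \exp(-\beta^{-\delta}\sum_{i=1}^{n-1}\ee_\mu[V_i])$. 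This is precisely the claimed inequality.

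The only genuinely delicate point — worth stating explicitly in the write-up — is the first step: confirming that under $\mu$ the sampler produces all $n$ points with probability one, so that the clean identity $p_n = \ee_\mu\bigl[\prod_{i=0}^{n-1}(1 - V_i/\vol(W))\bigr]$ holds without any truncation term. This is exactly where the hypotheses $n \le 2\beta$, $\delta > 1$, and ``$\beta$ sufficiently large'' enter: they force $\max_{i \le n-1} V_i < \vol(W)$, equivalently $x_i < 1$, which is simultaneously what makes the logarithmic expansion legitimate. Everything downstream — the elementary inequality for $\log(1-x)$, the substitution $V_i \le i\upsilon_d$, and Jensen — is routine.
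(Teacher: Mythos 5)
Your proposal is correct and follows essentially the same route as the paper: reduce $p_n$ to $\ee_\mu[L(\mX_n)]$ via the change of measure, expand $\log(1-V_i/\beta^\delta)$ (valid since $V_i\le i\upsilon_d<\beta^\delta$ for $n\le 2\beta$ and large $\beta$), pull out the higher-order terms using the deterministic bound $V_i\le i\upsilon_d$, and apply Jensen's inequality to the remaining first-order term. If anything, your write-up is slightly more careful than the paper's, since you make explicit both the $1/j\le 1$ step in the logarithm expansion and the fact that the sampler produces all $n$ points because the blocking region never fills the window.
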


\begin{proof}
Recall from Eq.~\eqref{eqn:radon-nikodym-der} that 
\[
L(\mX_n) = \prod_{i=1}^{n-1}\Big(1 - \frac{ V_{i}}{\lambda^d}\Big)^+ = \prod_{i=1}^{n-1}\Big(1 - \frac{ V_{i}}{\beta^\delta}\Big)^+.
\]
From Eq.~\eqref{eqn:bounds_on_vi}, for all $i < n \leq 2\beta$, we have 
\[
V_i \leq i\upsilon_d < n \upsilon_d \leq 2\beta \upsilon_d.
\]
Since $\delta > 1$, for sufficiently large values of $\beta$, we have 
$2\beta \upsilon_d < \beta^{\delta}$, and thus, $V_i/\beta^\delta < 1$ for all $i < n \leq 2\beta$.
Consequently, using the Taylor expansion of $\log(1 - u)$, $u < 1$, and the definition of $p_n$, we obtain
\begin{align*}
    p_n = \ee_\mu \lt[\exp\lt( \sum_{i = 1}^{n -1} \log(1 - V_i/\beta^{\delta})\rt) \rt]
        = \ee_\mu \lt[\exp\lt( - \sum_{i = 1}^{n -1} \sum_{j = 1}^\infty \frac{V_i^j}{\beta^{j\delta}}\rt) \rt],
\end{align*}
for large values of $\beta$ with $n \leq 2\beta$. Furthermore, we have
\begin{align*}
    p_n = \ee_\mu \lt[\exp\lt( - \frac{1}{\beta^{\delta}}\sum_{i = 1}^{n -1} V_i\rt) \exp\lt( - \sum_{i = 1}^{n -1} \sum_{j = 2}^\infty \frac{V_i^j}{\beta^{j\delta}}\rt) \rt],
\end{align*}
Then, from Eq.~\eqref{eqn:bounds_on_vi}, 
\begin{align*}
    p_n &\geq \ee_\mu \lt[\exp\lt( - \frac{1}{\beta^{\delta}}\sum_{i = 1}^{n -1} V_i\rt) \rt] \exp\lt( - \sum_{i = 1}^{n -1} \sum_{j = 2}^\infty \frac{(i\upsilon_d)^j}{\beta^{j\delta}}\rt)\\
        &\geq \exp\lt( - \frac{1}{\beta^{\delta}}\sum_{i = 1}^{n -1} \ee_\mu[V_i]\rt) \exp\lt( - \sum_{i = 1}^{n -1} \sum_{j = 2}^\infty \frac{(i\upsilon_d)^j}{\beta^{j\delta}}\rt),
\end{align*}
where the last inequality 
is a consequence of Jensen's inequality since $\exp(\cdot)$ is convex.
\end{proof} 

\begin{lemma}
    \label{lem:upper-bound-ELX}
    For any $n \leq 2\beta$, 
    \begin{align}
    \sqrt{\ee_\mu\lt[\exp\lt(\frac{2}{\beta^{\delta}}  \sum_{i =1}^{n-1} (i\upsilon_d - V_i)\rt)\rt]} = \exp\lt( O\lt(\beta^{2 -{\delta(d+1)/d}} + \beta^{3 - 2\delta} \rt)\rt),
    \label{eqn:part1-of-lemma}
\end{align}
and 
\begin{align}
 \sum_{i = 1}^{n -1} \sum_{j = 2}^\infty\frac{(i\upsilon_d)^j}{\beta^{j\delta}} = O( \beta^{3 - 2\delta}). 
\label{eqn:part2-of-lemma}
\end{align}
\end{lemma}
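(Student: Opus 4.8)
The two displays are handled separately, and Eq.~\eqref{eqn:part2-of-lemma} is routine. For $n\le 2\beta$ and $i\le n-1$ we have $i\upsilon_d/\beta^\delta\le 2\upsilon_d\beta^{1-\delta}\le\tfrac12$ once $\beta$ is large, since $\delta>1$; summing the geometric tail gives $\sum_{j\ge2}(i\upsilon_d/\beta^\delta)^j\le 2(i\upsilon_d)^2/\beta^{2\delta}$, and then $\sum_{i\le n-1}\sum_{j\ge2}(i\upsilon_d)^j/\beta^{j\delta}\le 2\upsilon_d^2\beta^{-2\delta}\sum_{i\le2\beta}i^2=O(\beta^{3-2\delta})$, uniformly in $n\le2\beta$. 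So I focus on Eq.~\eqref{eqn:part1-of-lemma}.

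The core idea is a sequential (tower-rule) exponential-moment bound. Write $\ell_k=\upsilon_d-(V_k-V_{k-1})$, so $0\le\ell_k\le\upsilon_d$ by Eq.~\eqref{eqn:bounds_on_vi}; since $V_0=0$, telescoping gives $i\upsilon_d-V_i=\sum_{k=1}^i\ell_k$, hence $\sum_{i=1}^{n-1}(i\upsilon_d-V_i)=\sum_{k=1}^{n-1}(n-k)\ell_k$. In the hard-sphere setting with maximal-volume blocking regions (the setting of this lemma), $B(\mX_k)$ is the union of the closed unit balls around $X_1,\dots,X_k$ intersected with $W$, so $\ell_k$ splits into a \emph{boundary loss} $\ell_k^{\mathrm{bdry}}=\vol\big(\{y:\|y-X_k\|\le1\}\setminus W\big)$ and an \emph{overlap loss} $\ell_k^{\mathrm{ovlp}}=\vol\big(\{y:\|y-X_k\|\le1\}\cap W\cap B(\mX_{k-1})\big)$. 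Set $\mc F_{k-1}=\sigma(X_1,\dots,X_{k-1})$. Using that under $\mu$ the point $X_k$ is uniform on $W\setminus B(\mX_{k-1})$, and that for $n\le2\beta$ and $\beta$ large $V_{k-1}\le(k-1)\upsilon_d\le2\beta\upsilon_d\le\tfrac12\beta^\delta$, so the free volume $\lambda^d-V_{k-1}\ge\tfrac12\lambda^d$, I will establish the \emph{deterministic} conditional bounds
\[
\ee_\mu\big[\ell_k^{\mathrm{bdry}}\mid\mc F_{k-1}\big]\le C_1\,\beta^{-\delta/d},\qquad
\ee_\mu\big[\ell_k^{\mathrm{ovlp}}\mid\mc F_{k-1}\big]\le C_2\,k\,\beta^{-\delta},
\]
for constants $C_1,C_2$ depending only on $d$: the first because the boundary layer $\{x\in W:\mathrm{dist}(x,\partial W)<1\}$ has volume $\lambda^d-(\lambda-2)_+^d=O(\lambda^{d-1})$, so averaging $\ell_k^{\mathrm{bdry}}\le\upsilon_d$ over it and dividing by $\lambda^d-V_{k-1}$ gives $O(\lambda^{-1})=O(\beta^{-\delta/d})$; the second because Fubini gives $\int_W\vol\big(\{y:\|y-x\|\le1\}\cap B(\mX_{k-1})\big)\,dx=\int_{B(\mX_{k-1})}\vol\big(\{x\in W:\|x-z\|\le1\}\big)\,dz\le\upsilon_d V_{k-1}$, and dividing by $\lambda^d-V_{k-1}\ge\tfrac12\lambda^d$ and using $V_{k-1}\le(k-1)\upsilon_d$ gives the $O(k\beta^{-\delta})$ bound. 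Write $a_k=C_1\beta^{-\delta/d}+C_2k\beta^{-\delta}$, a deterministic upper bound for $\ee_\mu[\ell_k\mid\mc F_{k-1}]$.

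With the weights $\theta_k=2(n-k)/\beta^\delta$ the increments satisfy $\theta_k\ell_k\le\theta_k\upsilon_d\le4\upsilon_d\beta^{1-\delta}\le1$ for $\beta$ large (again $\delta>1$, $n\le2\beta$), so the chord bound $e^u\le1+(e-1)u$ on $[0,1]$ together with the deterministic bound on $a_k$ gives $\ee_\mu[e^{\theta_k\ell_k}\mid\mc F_{k-1}]\le\exp\!\big((e-1)\theta_ka_k\big)$, a constant. Conditioning successively on $\mc F_{n-2},\mc F_{n-3},\dots,\mc F_0$ and pulling out these factors yields
\[
\ee_\mu\!\Big[\exp\!\Big(\tfrac{2}{\beta^\delta}\sum_{i=1}^{n-1}(i\upsilon_d-V_i)\Big)\Big]\le\exp\!\Big((e-1)\sum_{k=1}^{n-1}\theta_ka_k\Big).
\]
Substituting $a_k$ and using $n\le2\beta$, $\sum_{k<n}(n-k)=O(\beta^2)$, $\sum_{k<n}(n-k)k=O(\beta^3)$, one obtains $\sum_{k=1}^{n-1}\theta_ka_k=O(\beta^{2-\delta-\delta/d}+\beta^{3-2\delta})=O(\beta^{2-\delta(d+1)/d}+\beta^{3-2\delta})$, uniformly in $n\le2\beta$; taking the square root only halves the implied constant, which is Eq.~\eqref{eqn:part1-of-lemma}.

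\textbf{Main obstacle.} The delicate step is the exponential-moment bound: one cannot interchange $\ee_\mu[\exp(\cdot)]$ with $\exp(\ee_\mu[\cdot])$, and the naive bound $\sum_i(i\upsilon_d-V_i)\le4\beta^2\upsilon_d$ would only give $\exp(O(\beta^{2-\delta}))$, far too large. The tower-rule argument closes precisely because every per-step weight $\theta_k=2(n-k)/\beta^\delta$ is $o(1)$ — this is where $\delta>1$ and the truncation $n\le2\beta$ are essential — and because the conditional expected losses admit upper bounds that do not depend on the (random) configuration $\mX_{k-1}$; the latter rests on the geometric facts that the free volume $\lambda^d-V_{k-1}$ stays of order $\lambda^d$ throughout the first $2\beta$ steps and that the overlap loss averages, via Fubini, to at most $\upsilon_dV_{k-1}/(\lambda^d-V_{k-1})$.
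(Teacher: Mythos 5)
Your proof is correct and follows essentially the same route as the paper's: telescope the deficit $i\upsilon_d - V_i$ into per-step losses, bound each step's conditional contribution deterministically by a boundary term $O(\beta^{-\delta/d})$ plus an overlap term $O(k\beta^{-\delta})$, and close the exponential moment by iterated conditioning, which yields the same exponent $O(\beta^{2-\delta(d+1)/d}+\beta^{3-2\delta})$. The only (cosmetic) differences are that the paper works with the indicator $I_i=\ind(V_i-V_{i-1}<\upsilon_d)$ and a union bound on its conditional probability, whereas you bound the conditional expectation of the actual loss $\ell_k$ directly via Fubini and use the chord bound $e^u\le 1+(e-1)u$; both give the same per-step factor.
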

\begin{proof}
For each $i = 1, \dots, n-1$, define the indicator random variable
$I_i = \ind( V_{i} - V_{i-1} < \upsilon_d)$.
Then, 
\begin{align}
i\upsilon_d - V_{i} &= \sum_{j = 1}^{i} \lt(\upsilon_d - (V_{j} - V_{j-1}) \rt) =  \sum_{j =1}^i I_i\lt(\upsilon_d - (V_{j} - V_{j-1}) \rt) 
\leq {  \upsilon_d \sum_{j =1}^i I_j,}
\label{eqn:low_bdd_Bi}
\end{align}
Thus, 
\begin{align*}
\sum_{i =1}^{n-1} (i\upsilon_d - V_i) \leq \upsilon_d \sum_{i = 1}^{n-1} \sum_{j = 1}^i I_j = \upsilon_d \sum_{j = 1}^{n-1} (n - j) I_j,
\end{align*}
for each $i = 1,\dots, n-1$.
Note that $I_i = 1$ means the $i^{th}$ point $X_i$ is generated within $1$ unit from the boundary of the window~$W$ or within $2$ units from the centers of all the existing $i-1$ points. {  Observe that  for any $j =1, \dots, i-1$, the probability of $\|X_i - X_j\|_2 \leq 2$ is at most $2^d \upsilon_d/(\beta^{\delta} - V_i)$. This is because, $X_j$ is generated on non-blocking region created by the points $1, \dots, i-1$ and $x_i$ has to be within two units distance from $X_i$.} Thus, we have
\[
\pp_\mu \lt( I_i = 1 | X_1,\dots, X_{i-1}\rt) \leq 
\frac{\beta^{\delta} - (\beta^{\delta/d} - 2)^d}{\beta^{\delta} - V_i} + \sum_{j = 1}^{i-1} \frac{2^d \upsilon_d}{\beta^{\delta} - V_i}.
\]
Since $V_i \leq i \upsilon_d,$ with $r_i =  \frac{\beta^{\delta} - (\beta^{{\delta/d} } -2)^d}{\beta^{\delta} - i\upsilon_d} + \frac{i \upsilon_d}{\beta^{\delta} - i \upsilon_d}$, we have 
$\pp_\mu\lt( I_i = 1| X_1,\dots, X_{i-1}\rt) \leq r_i.$
Consequently,
\begin{align*}
\ee_\mu\lt[\exp\lt(\frac{2}{\beta^{\delta}}  \sum_{i =1}^{n-1} (i\upsilon_d - V_i)\rt)\rt] &\leq \ee_\mu\lt[\exp\lt(\frac{2 \upsilon_d}{\beta^{\delta}} \sum_{j = 1}^{n-1} (n - j) I_j \rt)\rt].
\end{align*}
Observe that the right side of the above inequality is equal to 
\begin{align*}
{  \ee_\mu\lt[\exp\lt(\frac{2 \upsilon_d}{\beta^{\delta}} \sum_{j = 1}^{n-2} (n - j) I_j \rt) \ee_\mu\lt[ \exp\lt(\frac{2 \upsilon_d}{\beta^{\delta}} I_{n-1} \rt)\big| X_1,\dots, X_{n-2}\rt]\rt]},
\end{align*}
which can be upper bounded by
\begin{align*}
\ee_\mu\lt[\exp\lt(\frac{2 \upsilon_d}{\beta^{\delta}} \sum_{j = 1}^{n-2} (n - j) I_j \rt) \rt] \lt((1 - r_{n-1}) + r_{n-1} \exp(2 \upsilon_d/\beta^{\delta}) \rt).
\end{align*}
By repeating the same procedure for all the terms in the above expectation and substituting the values of $c_i$'s, we establish that 
\begin{align}
\ee_\mu\lt[\exp\lt(\frac{2}{\beta^{\delta}}  \sum_{i =1}^{n-1} (i\upsilon_d - V_i)\rt)\rt] 
&\leq \prod_{i = 1}^{n-1}\lt(1 + r_{n-i} (e^{2i \upsilon_d/\beta^{\delta}} -1) \rt)\nonumber\\
&\leq \exp\lt( \sum_{i = 1}^{n-1} r_{n-i} (e^{2i \upsilon_d/\beta^{\delta}} -1) \rt). 
\label{eqn:exp-exp-bdd}
\end{align}
Then, notice that since $\delta > 1$, for each $i < n \leq 2\beta$, $iv/\beta^{\delta} \to 0$ as $\beta \to \infty$. Furthermore, 
 $\beta^{\delta} - (\beta^{\delta/d} - 2)^d = O(\beta^{\delta(d -1)/d})$, and hence,  
$r_i = O\lt((\beta^{\delta(d-1)/d} + i)/\beta^{\delta}\rt)$.
Also,
\begin{align*}
    \exp(2i \upsilon_d/\beta^{\delta}) -1 = \sum_{j=1}^\infty \frac{(2i \upsilon_d)^j}{j!\beta^{j\delta}} = \frac{2i\upsilon_d}{\beta^{\delta}} \sum_{j=0}^\infty \frac{(2i \upsilon_d)^j}{(j+1)!\beta^{j\delta}} \leq \frac{2i\upsilon_d}{\beta^{\delta}} \exp(2i \upsilon_d/\beta^{\delta}).
\end{align*}
Since $\exp(2i \upsilon_d/\beta^{\delta}) \to 1$ as $\beta \to \infty$ for all $i < n \leq 2\beta$, we have
\[
\sum_{i = 1}^{n-1} r_{n-i} (\exp(2i \upsilon_d/\beta^{\delta}) -1) = O\lt(\frac{n^2\beta^{\delta(d-1)/d}}{\beta^{2\delta}} + \frac{n^3}{\beta^{2\delta}} \rt) = O\lt(\beta^{2 - {\delta(d+1)/d}} + \beta^{3 - 2\delta} \rt),
\]
which completes the proof of \eqref{eqn:part1-of-lemma} using \eqref{eqn:exp-exp-bdd}.
Now to prove \eqref{eqn:part1-of-lemma}, observe for large $\beta$ that if $i < n \leq 2\beta$, 
\begin{align*}
    \sum_{i = 1}^{n -1} \sum_{j = 2}^\infty \frac{(i\upsilon_d)^j}{\beta^{j\delta}} 
    = \frac{\upsilon_d^2}{\beta^{2\delta}} \sum_{i = 1}^{n -1} i^2 \sum_{j = 0}^\infty \frac{(i\upsilon_d)^j}{\beta^{j\delta}} 
    = \frac{\upsilon_d^2}{\beta^{2\delta}} \sum_{i = 1}^{n -1} i^2 \frac{1}{1 - i\upsilon_d/\beta^{\delta}}
    \leq \frac{\upsilon_d^2}{\beta^{2\delta}} \frac{1}{1 - n\upsilon_d/\beta^{\delta}}\sum_{i = 1}^{n -1} i^2,
\end{align*}
where we used $i\upsilon_d < n\upsilon_d$ in the last inequality.
We complete the proof by observing that $n/\beta^\delta \leq 2\beta/\beta^\delta \to 0$ as $\beta \to \infty$ and 
$\sum_{i < n} i^2 \leq \sum_{i < 2\beta} i^2 = O(\beta^3).$
\end{proof}
\begin{proof}[Proof of Lemma~\ref{lem:upper-bound}]
Observe that
\begin{align*}
    \ee_\mu[L(\mX_n)^2] &\leq \ee_\mu\lt[\exp\lt(-  \frac{2}{\beta^{\delta}}\sum_{i =1}^{n-1} V_i\rt)\rt]\\
                        &= \exp\lt(- \frac{2}{\beta^{\delta}} \sum_{i =1}^{n-1} \ee_\mu[V_i]\rt) \ee_\mu\lt[\exp\lt(-  \frac{2}{\beta^{\delta}}  \sum_{i =1}^{n-1} (V_i - \ee_{\mu}[V_i])\rt)\rt]\\
                        &\leq \exp\lt(-  \frac{2}{\beta^{\delta}} \sum_{i =1}^{n-1} \ee_\mu[V_i]\rt) \ee_\mu\lt[\exp\lt(\frac{2}{\beta^{\delta}}  \sum_{i =1}^{n-1} (i\upsilon_d - V_i)\rt)\rt],
\end{align*}
where the last inequality follows from the upper bound in Eq.~\eqref{eqn:bounds_on_vi}. Thus, using Lemma~\ref{lem:lower-bound-pn}, we have \begin{align}
\frac{\sqrt{\ee_\mu[L(\mX_n)^2]}}{p_n} \leq \exp\lt( \sum_{i = 1}^{n -1} \sum_{j = 2}^\infty \frac{(i\upsilon_d)^j}{\beta^{j\delta}}\rt) \sqrt{\ee_\mu\lt[\exp\lt(\frac{2}{\beta^{\delta}}  \sum_{i =1}^{n-1} (i\upsilon_d - V_i)\rt)\rt]}.\label{eqn:upper-bdd-1}
\end{align}
We now complete the proof using Lemma~\ref{lem:upper-bound-ELX}.
\end{proof}

\end{appendices}

\bibliography{Ref}
\end{document}